\theoremstyle{plain}
\renewcommand{\theequation}{\arabic{section}.\arabic{equation}}
\renewcommand\thefigure{\thesection.\@arabic\c@figure}
\renewcommand\thetable{\thesection.\@arabic\c@table}
\newtheorem{thm}{\bf Theorem}[section]
\newtheorem{cor}{\bf Corollary}[section]
\newtheorem{prop}{Proposition}[section]
\newtheorem{lmm}{\bf Lemma}[section]
\newenvironment{lemma}{\begin{lmm}}{\end{lmm}}
\theoremstyle{remark}
\newtheorem{remark}{\bf Remark}[section]
\theoremstyle{definition}
\newtheorem{exm}{\bf Example} 
\def \ri {{\rm i}}
\def \rd {{\rm d}}
\def \cb {\color{blue}}
\def \cred {\color{red}}
\newcommand{\comm}[1]{ \marginpar{%
\vskip-\baselineskip 
\raggedright\footnotesize
\itshape\hrule\smallskip{\color{red}#1}\par\smallskip\hrule}}
\begin{document}
\baselineskip 14pt
\bibliographystyle{plain}

\title[Logarithmic Nonlinearity] {Error  Analysis of  A first-order IMEX Scheme for the  Logarithmic Schr\"odinger Equation}
\author[
    L. Wang,\;  J. Yan\, \&\, X. Zhang
	]{Li-Lian Wang${}^{\dag}$,\; Jingye Yan${}^{\ddag}$\;  and\; Xiaolong Zhang${}^{\S}$
		}
	\thanks{${}^{\dag}$Division of Mathematical Sciences, School of Physical
		and Mathematical Sciences, Nanyang Technological University,
		637371, Singapore. The research of the authors is partially supported by Singapore  MOE AcRF Tier 1 Grant:  RG15/21. Email: lilian@ntu.edu.sg.\\
		\indent ${}^{\ddag}$Corresponding author. College of Mathematics and Physics, Wenzhou University, Wenzhou, China, and
School of Mathematical Sciences, Jiangsu University, Zhenjiang,  China.
Email: yanjingye0205@163.com.\\
		\indent${}^{\S}$MOE-LCSM, CHP-LCOCS, School of Mathematics and Statistics, Hunan Normal University, Changsha, 410081, China.  The research of this author is supported in part by the National Natural Science Foundation of China (No.12101229) and the Hunan Provincial Natural Science Foundation of China (No.2021JJ40331). Email: xlzhang@hunnu.edu.cn.\\
		\indent  The second author would like to thank the support of NTU and part of this  work was done when she worked as a Research Fellow in NTU.
		The third author would like to acknowledge the support of China Scholarship Council (CSC, No. 202106720024) as postdoctor for the visit of NTU}

\keywords{Logarithmic Schr\"odinger equation,   locally H\"older continuity,  non-differentiability, nonlinear Gr{\"{o}}nwall's inequality} \subjclass[2000]{65N35, 65N22, 65F05, 35J05}

\begin{abstract}  The logarithmic Schr\"odinger  equation (LogSE) has  a logarithmic nonlinearity $f(u)=u\ln |u|^2$ that is not differentiable at $u=0.$ Compared with its counterpart with  a regular nonlinear term, it  possesses  richer and  unusual dynamics, though the low regularity of the nonlinearity brings about significant challenges in both analysis and computation. Among very limited numerical studies, the semi-implicit regularized  method  via regularising $f(u)$ as  $ u^{\varepsilon}\ln ({\varepsilon}+ |u^{\varepsilon}|)^2$ to overcome the blowup of $\ln |u|^2$ at $u=0$ has been investigated  recently in literature. With the understanding of $f(0)=0,$ we analyze the non-regularized first-order Implicit-Explicit (IMEX) scheme for the LogSE. We introduce some   new tools for the error analysis that include the characterization of the H\"older continuity of the logarithmic term, and a nonlinear Gr\"{o}nwall's inequality.
		We provide ample numerical results to demonstrate the expected convergence.
	We position this work as the first one to study the direct linearized scheme for the LogSE  as far as we can tell.
\end{abstract}
 \maketitle

\vspace*{-10pt}
\section{Introduction}
In this paper, we are concerned with the numerical solution and   related error analysis for the logarithmic Schr\"odinger equation  of the form
\begin{equation}\label{lognls0B}
\begin{cases}
\ri  \partial_t u(x,t)+ \Delta u(x,t)=\lambda u (x,t)\ln(|u(x,t)|^2), \quad   x\in  \Omega, \;\; t>0,\\[2pt]
u(x,t)=0,\quad  x \in {\partial \Omega},\;\; t\ge 0; 
\quad u(x,0)=u_0(x),\quad x\in \bar \Omega,
\end{cases}
\end{equation}
where $\ri=\sqrt{-1},$  $\Omega\subset {\mathbb R}^d ( d=1,2,3 )$ is a bounded domain with a smooth boundary, and $u_0$ is a given function with a suitable regularity. The LogSE originally  arisen from the modeling of  quantum mechanics  \cite{Bialynicki1976nonlinear,Bialynicki1979Gaussons} has found diverse applications in  physics and engineering (see, e.g.,    \cite{Avdeenkov2011quantum,Buljan2003Incoherent,Hansson2009propagation,Hefter1985Application,krolikowski2000unified,De2003logarithmic,Zloshchastiev2010Logarithmic}).
It has attracted much  attention in research of the PDE theory and numerical  analysis. Indeed,   the presence of the logarithmic nonlinear term brings about significant challenges for both analysis and computation, but in return gives rise to some unique dynamics that the Schr\"odinger equation with e.g., cubic nonlinearity may not have.
One feature  of  the logarithmic nonlinearity is the tensorization property (cf.\! \cite{Bialynicki1976nonlinear}): {\em if
$u_0(x)=\Pi_{j=1}^d u_{j0}(x_j)$ on a separable domain $\Omega=\bigotimes_{j=1}^d\Omega_j,$ then  $u(x,t)=\Pi_{j=1}^d u_{j}(x_j,t)$ where $u_j(x_j,t)$ satisfies the LogSE in one spatial dimension on $\Omega_j$ with the initial data: $u_j(x_j,0)=u_{j0}(x_j)$ for $1\le j\le d.$} We refer to Carles and Gallagher \cite{Carles2018Universal} for an up-to-date review of the mathematical theory  on the focusing case (i.e., $\lambda<0$) and for some new results for the defocusing case (i.e., $\lambda>0$). Although the ``energy''  is conserved  (cf. \cite{Cazenave2003semilinear,Cazenave1980}):
\begin{equation}\label{lognls0BA}
E[u](t):=\|\nabla u(\cdot, t)\|_{L^{2}\left(\Omega\right)}^{2}+\lambda \int_{\Omega}|u(x,t)|^{2} \ln |u(x,t)|^{2} {\rm d}x= E[u_0],\quad  t\ge 0,
\end{equation}
as with the usual Schr\"odinger equation,  it does not have a definite sign since the logarithmic term $\ln|u|^2$ can change sign within $\Omega$ as time evolves. Moreover, according to \cite{Carles2018Universal}, no solution is dispersive   for $\lambda<0,$ while for
$\lambda>0$, solutions have a dispersive behavior (with a nonstandard rate of dispersion).
It is noteworthy that there has been a growing recent interest in the LogSE with a potential $V(x)$, where $(\Delta +V) u$ is in place of $\Delta u$ in \eqref{lognls0B} (see, e.g., \cite{Ardila2020Logarithmic,Carles2021Logarithmic,Carles2021nonuniqueness,Chauleur2021Dynamics,Scott2018solution,Zhang2020Bound}).

The numerical solution of the LogSE is less studied largely due to the non-differentiability of the logarithmic nonlinear term $f(u)=u\ln|u|$ at $u=0.$
Note that the partial derivatives $\partial_t f(u)$ and $\partial_{x_j} f(u)$ blow up, whenever $u( x,t)=0$ even for a smooth solution $u.$ In fact, $f(u)$ only  possesses $\alpha$-H\"older continuity with $\alpha\in (0,1)$ (see Lemma \ref{Holderlmm} and Theorem \ref{N2L2} below).
To avoid the blowup of $\ln|u|$ as $|u|\to 0,$  Bao et al  \cite{Bao2019Error} first proposed the regularization of the logarithmic nonlinear term, leading to the  regularized logarithmic Schr\"{o}dinger equation (RLogSE):
\begin{equation}\label{Rlognls0C}
\begin{cases}
\ri  \partial_t u^{\varepsilon}(x,t)+ \Delta u^{\varepsilon}(x,t)=\lambda u^{\varepsilon} (x,t)\ln(\varepsilon+|u^{\varepsilon}(x,t)|)^2, \quad   x\in  \Omega, \;\; t>0,\\[2pt]
u^{\varepsilon}(x,t)=0,\quad  x \in {\partial \Omega},\;\; t\ge 0; 
\quad u^{\varepsilon}(x,0)=u_0(x),\quad x\in \bar \Omega,
\end{cases}
\end{equation}
where the regularization parameter   $0<\varepsilon\ll 1.$ It was shown therein that
{\em if $u_0\in H^2(\Omega),$
\begin{equation}\label{uepsest}
\|u^{\varepsilon}-u\|_{L^{\infty}(0, T ; L^{2}(\Omega))} \leq C_{1} \varepsilon, \quad\|u^{\varepsilon}-u\|_{L^{\infty}(0, T ; H^{1}(\Omega))} \leq C_{2} \sqrt \varepsilon,
\end{equation}
where the positive constants $C_1=C_1(|\lambda|, T,|\Omega|)$ and $C_2=C_2(|\lambda|, T,|\Omega|,\|u_{0}\|_{H^{2}(\Omega)}).$} Then the semi-implicit Crank-Nicolson leap-frog in time
and central difference in  space were adopted to discretize the RLogSE.
In Bao et al \cite{Bao2019Regularized},
 they further introduced the first-order Lie-Trotter splitting and Fourier spectral method  for the    RLogSE
 \eqref{Rlognls0C}, where the conservation of mass is preserved and the constraints for discrete parameters in \cite{Bao2019Error} could be relaxed. This regularized splitting method has been recently extended in Carles and Su \cite{Carles2022numerical} to numerically solve the LogSE with a harmonic potential studied in their work \cite{Carles2021Logarithmic}. The notation of regularization was also imposed at the energy level in Bao et al  \cite{Bao2022Error} that resulted in a  regularization different from  \eqref{Rlognls0C} in their first work \cite{Bao2019Error}. We also point out that in \cite{Li2019Numerical}, the regularized scheme  in \cite{Bao2019Error} was applied to the LogSE in an unbounded domain truncated  with an artificial boundary condition. It is important to remark that the error analysis in
 \cite{Bao2019Error,Bao2019Regularized} indicated the severer restrictions in discretization parameters or loss of order due to the  logarithmic nonlinear term.

 Although  $f'(u)$ blows up at $u=0,$    $f(u)=u\ln|u|$ is well-defined at $u=0$ (note: $f(0)=\lim\limits_{u\to 0} f(u)=0$). With this understanding, we propose to directly discretize the original  LogSE  \eqref{lognls0B}   without regularizing the  nonlinear term.  In the course of finalising this work, we realized that Paraschis and  Zouraris  \cite{Paraschis2023FixedpointLogSE}   analysed  the (implicit) Crank-Nicolson scheme
  for  \eqref{lognls0B} (without regularization), so it required solving a nonlinear system at each time step. Note that the fixed-point iteration was employed therein as the  non-differentiable logarithmic nonlinear term ruled out the use of Newton-type iterative methods.
  The implicit scheme inherits the property of the continuous problem  \eqref{lognls0B},  so in the error analysis,
  the nonlinear term  can be treated easily as a Lipschitz type in view of
  \begin{equation}\label{LipSC}
\big|\Im [(f(u)-f(v))(u-v)^*]\big| \leq |u-v|^{2}, \quad \forall u, v \in \mathbb{C},
  \end{equation}
  see 
  \cite[Lemma 1.1.1]{Cazenave1980}.

  As far as we can tell, there is no work on the semi-implicit  scheme for the LogSE   without regularization.  The main purpose is to employ and analyze  the first-order IMEX scheme for \eqref{lognls0B}, that is,
  \begin{equation}\label{CNscheme}
\begin{cases}
 \ri  D_{\tau} u^n(x)+ { \Delta   u^{n+1}(x)}=2\lambda f(u^n(x)), \quad   x\in  \Omega, \;\; n\ge 0,\\[2pt]
u^{n+1}(x)=0,\quad x \in {\partial \Omega}; 
\quad u^0(x)=u_0(x),\quad x\in \bar \Omega,
\end{cases}
\end{equation}
where  $\tau$ is the time step size,  $t_n=n\tau$ with $0\le n\le [T/\tau]:=N_t,$ and  given a sequence $\{v^n\}$ defined on $t_n,$ we denote
\begin{equation}\label{deltafD00}
\begin{split}
&  D_{\tau}v^{n}=\frac{v^{n+1}-v^{n}}{\tau}.
\end{split}
\end{equation}
Here we focus on this scheme for the reason that it is a relatively  simpler setting to outstand the key to  dealing with the logarithmic nonlinear term.
In space, we adopt the finite-element method for \eqref{CNscheme}.

The rest of the paper is organized as follows. In section \ref{Sect1}, we present the essential tools for the error analysis include the characterization of the H\"older continuity of the logarithmic nonlinear term, and a nonlinear Gr\"{o}nwall's inequality. In sections \ref{sect2}, we conduct error estimates for the LogSE.  The final section is for numerical results and discussions.

\medskip
\section{ H\"older continuity and a nonlinear Gr\"{o}nwall's inequality}\label{Sect1}

In this section, we characterize the H\"older continuity of the nonlinear functional $f(u)=u\ln |u|,$ and then present a useful nonlinear Gr\"{o}nwall's inequality. These results  are indispensable to the forthcoming analysis.
\subsection{H\"older continuity of  $f(u)=u\ln |u|$}
\begin{lemma}\label{Holderlmm}
Let $f (z)=z \ln |z|$ for  $z\in \mathbb C.$   If $|u|, |v|\ge  0,$ then we have
\begin{equation}\label{fuvl}
|f(u)-f (v)|\leq  { (|\!\ln y| +1)}\,  |u-v|,\quad y:=\max\{|u|,|v|\}.
\end{equation}
 Given $\alpha\in (0,1),$ denote $\delta_\alpha:=e^{\frac \alpha {\alpha-1}}.$   If  $|u|, |v|\in [0, \epsilon]$ and $0\le \epsilon\le \delta_\alpha,$  then  we have
\begin{equation}\label{holderbnd}
|f(u)-f (v)|
  \le {\mathcal H_\alpha}({\epsilon})  |u-v|^{\alpha},\quad {\mathcal H_\alpha}({\epsilon}):=(2\epsilon)^{1-\alpha}\big(|\!\ln \epsilon|+1\big),
\end{equation}
so  $f (z)$ is {\rm(}locally{\rm)} $\alpha$-H\"older continuous.
\end{lemma}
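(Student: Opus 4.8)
The plan is to first establish the logarithmically weighted Lipschitz bound \eqref{fuvl} and then deduce the Hölder bound \eqref{holderbnd} from it via a careful one–variable optimization. Throughout I may assume without loss of generality that $y=|u|\ge |v|$, and I treat $f(0)=0$ as a limit so that every expression below extends continuously to $v=0$ (the degenerate case $u=v=0$ being trivial).

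For \eqref{fuvl} the starting point is the algebraic splitting
\begin{equation*}
f(u)-f(v)=u\ln|u|-v\ln|v|=(u-v)\ln|u|+v\big(\ln|u|-\ln|v|\big),
\end{equation*}
which gives $|f(u)-f(v)|\le |u-v|\,\big|\ln|u|\big| + |v|\,\big(\ln|u|-\ln|v|\big)$, where the logarithm difference is nonnegative because $|u|\ge|v|$. The first term already yields $|\ln y|\,|u-v|$. For the second I would invoke the elementary inequality $\ln t\le t-1$ with $t=|u|/|v|\ge 1$, which gives $|v|\big(\ln|u|-\ln|v|\big)=|v|\ln\!\big(|u|/|v|\big)\le |u|-|v|$; combining this with the reverse triangle inequality $|u|-|v|=\big||u|-|v|\big|\le |u-v|$ bounds the second term by $|u-v|$. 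Adding the two contributions produces \eqref{fuvl}.

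For the Hölder bound \eqref{holderbnd} I would start from \eqref{fuvl} and factor $|u-v|=|u-v|^{\alpha}\,|u-v|^{1-\alpha}$, estimating the low-power factor by $|u-v|^{1-\alpha}\le(|u|+|v|)^{1-\alpha}\le(2y)^{1-\alpha}$. This reduces the claim to
\begin{equation*}
2^{1-\alpha}\,g(y)\le \mathcal H_\alpha(\epsilon),\qquad g(y):=\big(|\ln y|+1\big)\,y^{1-\alpha}=(1-\ln y)\,y^{1-\alpha},
\end{equation*}
for all $y\in(0,\epsilon]$, where I used $\epsilon\le\delta_\alpha<1$ so that $\ln y<0$ and $|\ln y|=-\ln y$. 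Since $\mathcal H_\alpha(\epsilon)=2^{1-\alpha}g(\epsilon)$, it suffices to show that $g$ attains its maximum on $(0,\epsilon]$ at the right endpoint.

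The crux is exactly this monotonicity, and it is where the threshold $\delta_\alpha$ enters. Differentiating gives
\begin{equation*}
g'(y)=y^{-\alpha}\big(-\alpha-(1-\alpha)\ln y\big),
\end{equation*}
so $g'(y)\ge 0$ precisely when $\ln y\le -\alpha/(1-\alpha)=\alpha/(\alpha-1)=\ln\delta_\alpha$, i.e. when $y\le\delta_\alpha$. Hence $g$ is nondecreasing on $(0,\delta_\alpha]$, and since $\epsilon\le\delta_\alpha$ it is nondecreasing on $(0,\epsilon]$, giving $g(y)\le g(\epsilon)$ and closing the argument. I expect the main obstacle to be recognizing that the naive estimate $|\ln y|\le|\ln\epsilon|$ is \emph{false} for $y\le\epsilon<1$, so one cannot simply pull the weight out of \eqref{fuvl}; instead one must balance the growing factor $|\ln y|$ against the shrinking factor $y^{1-\alpha}$, and the monotonicity of $g$ makes this precise while identifying $\delta_\alpha$ as the largest admissible $\epsilon$.
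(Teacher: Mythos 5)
Your proposal is correct and follows essentially the same route as the paper: the identical splitting $f(u)-f(v)=(u-v)\ln|u|+v(\ln|u|-\ln|v|)$ with the elementary bound $\ln t\le t-1$ (the paper's $\ln(1+x)<x$ in disguise) for \eqref{fuvl}, and the same factoring $|u-v|=|u-v|^{\alpha}|u-v|^{1-\alpha}$ followed by the monotonicity of $(1-\ln y)\,y^{1-\alpha}$ on $(0,\delta_\alpha]$ for \eqref{holderbnd}. The only cosmetic difference is that you verify the monotonicity by an explicit derivative computation, whereas the paper merely asserts the stationary point of $\mathcal H_\alpha$; the arguments are otherwise the same.
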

\begin{proof}
 We largely follow Alfaro and Carles  \cite{Alfaro2017Superexponential} and provide the proof for the readers' reference.
 In view of the symmetry, we only need to consider $|u|\ge |v|.$
If $|u|>|v|>0,$ then we have
\begin{equation*}
\begin{split}
|f(u)-f (v)|& =  \big|(u-v)\ln |u|+v(\ln |u|-\ln |v|)\big|
\\ &\le \Big\{ |u-v|\big|\!\ln |u|\big| + |v|  \ln \Big(1+\frac{|u|-|v|}{|v|}\Big)\Big\}
\\&\le  \big\{ |u-v|\big|\!\ln |u|\big| + |u|-|v|\big\}
 \le  (|\!\ln |u|| +1) |u-v| ,
\end{split}
\end{equation*}
where we used the property:  $\ln (1+x)<x$ for $x>0,$ and the fact: $ |u|-|v|\le |u-v|.$
It is evident that if $|u|=|v|,$ then
\begin{equation*}
\begin{split}
|f(u)-f (v)|& =  |\!\ln |u|| \,|u-v|\le (|\!\ln |u|| +1) |u-v|.
\end{split}
\end{equation*}
In view of $f(0)=0$, the inequality  \eqref{fuvl} apparently holds for $|v|=0.$

We now turn to the proof of \eqref{holderbnd}.  If $ |u|\ge |v|\ge 0,$ we obtain from \eqref{fuvl} immediately that
for any $\alpha\in (0,1),$
\begin{equation}\label{revised1}
\begin{split}
|f(u)-f (v)| & \leq  \big\{(|\!\ln |u||+1)  |u-v|^{1-\alpha}\big\}  {|u-v|^{\alpha}}\\
&\leq \big\{ (|\!\ln |u||+1) (2 |u|)^{1-\alpha}\big\} {|u-v|^{\alpha}}\,  =  {\mathcal H_\alpha}(|u|) {|u-v|^{\alpha}}.
\end{split}
\end{equation}
Note that for $x\in (0,1),$
$$ {\mathcal H_\alpha}(x)=(2x)^{1-\alpha}(|\!\ln x|+1) =(2x)^{1-\alpha}(1-\ln x), $$
 has a unique stationary point $x=\delta_\alpha:=e^{-\frac\alpha  {1-\alpha}}.$ Moreover, it
is monotonically increasing for $x\in (0,  \delta_\alpha],$ but decreasing for $ \delta_\alpha<x< 1$.  Thus with the understanding  ${\mathcal H_\alpha}(0)=0,$
we have ${\mathcal H_\alpha}(|u|)\le {\mathcal H_\alpha}(\epsilon)$ for   $0\le |v|\le |u|\le \epsilon\le \delta_\alpha.$
Accordingly,  the bound   \eqref{holderbnd} follows from  \eqref{revised1} and the symmetry in $u$ and $v.$
\end{proof}

With the aid of Lemma \ref{Holderlmm}, we can obtain the following $L^2$-bound. Throughout the paper, we denote the norm of
$L^p(\Omega)$ for $0<p\le\infty$ by $\|\cdot\|_{L^p(\Omega)},$ but for $p=2,\infty,$ we simply use the notation  $\|\cdot\|$ and  $\|\cdot\|_\infty.$
\begin{thm}\label{N2L2}
Let $f (u)=u \ln |u|$ be a composite function defined on $\Omega.$ Assume that  $u,v\in L^\infty(\Omega)$ and  denote
$\Lambda_\infty= \max\{ \|u \|_{\infty}, \|v \|_{\infty} \}.$
 For any $\alpha \in (0,1)$ and $\epsilon \in (0, \delta_\alpha]$ with   $\delta_\alpha=e^{\frac \alpha {\alpha-1}},$ we have the following bounds.
\begin{itemize}
\item[(i)]If $\Lambda_\infty >\epsilon,$   then
\begin{equation}\label{ful2est}
\begin{aligned}
\|f(u)-f(v)\|^2
& \le  
 {\mathcal H}^2_\alpha(\epsilon) \left\|u-v\right\|_{L^{2\alpha}(\Omega)}^{2\alpha} +\Upsilon^2(\epsilon, \Lambda_\infty) \, \|u-v\|^{2},
\end{aligned}
\end{equation}
where $ \mathcal{H}_\alpha(\epsilon)$ is given in \eqref{holderbnd}  and
\begin{equation}\label{Hupsilon}
\begin{split}
\Upsilon(\epsilon, \Lambda_\infty)= \max_{\epsilon\le y\le  \Lambda_\infty } \!\!\big\{|\!\ln y|+1\big\}.
\end{split}
\end{equation}
\item[(ii)] If $\Lambda_\infty \le \epsilon$, then
\begin{equation}\label{fuvHolder}
\| f(u) - f(v) \| \le \mathcal H _\alpha( \epsilon ) \| u - v \|_{ L^{2\alpha}(\Omega) }^{\alpha}.
\end{equation}
\end{itemize}
\end{thm}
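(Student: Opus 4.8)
The plan is to reduce both estimates to a single pointwise splitting of $\Omega$ according to the local size of $y(x):=\max\{|u(x)|,|v(x)|\}$, and then to invoke the two regimes of Lemma~\ref{Holderlmm} on the corresponding pieces. Concretely, I would set
\[
\Omega^-_\epsilon:=\{x\in\Omega: y(x)\le \epsilon\},\qquad \Omega^+_\epsilon:=\{x\in\Omega: y(x)>\epsilon\},
\]
which are measurable since $u,v$ (hence $y$) are, and decompose $\|f(u)-f(v)\|^2=\int_{\Omega^-_\epsilon}|f(u)-f(v)|^2\,{\rm d}x+\int_{\Omega^+_\epsilon}|f(u)-f(v)|^2\,{\rm d}x$ (finite since $u,v\in L^\infty$ and $\Omega$ is bounded).

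On $\Omega^-_\epsilon$ we have $|u(x)|,|v(x)|\in[0,\epsilon]$ with $\epsilon\le\delta_\alpha$, so the Hölder bound \eqref{holderbnd} applies pointwise, giving $|f(u)-f(v)|\le \mathcal H_\alpha(\epsilon)\,|u-v|^\alpha$; integrating and enlarging the domain to $\Omega$ yields $\int_{\Omega^-_\epsilon}|f(u)-f(v)|^2\,{\rm d}x\le \mathcal H_\alpha^2(\epsilon)\int_\Omega|u-v|^{2\alpha}\,{\rm d}x$. The one identity worth recording is $\int_\Omega|u-v|^{2\alpha}\,{\rm d}x=\|u-v\|_{L^{2\alpha}(\Omega)}^{2\alpha}$, by definition of the $L^{2\alpha}$-norm, which converts the integral into the stated norm. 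On $\Omega^+_\epsilon$ I would instead use the global Lipschitz-type bound \eqref{fuvl}, $|f(u)-f(v)|\le(|\!\ln y|+1)|u-v|$; since $\epsilon<y(x)\le\Lambda_\infty$ there, the prefactor is dominated by $\Upsilon(\epsilon,\Lambda_\infty)=\max_{\epsilon\le y\le\Lambda_\infty}\{|\!\ln y|+1\}$ (a genuine maximum, as $y\mapsto|\!\ln y|+1$ is continuous on the compact interval $[\epsilon,\Lambda_\infty]$). Thus $\int_{\Omega^+_\epsilon}|f(u)-f(v)|^2\,{\rm d}x\le \Upsilon^2(\epsilon,\Lambda_\infty)\int_\Omega|u-v|^2\,{\rm d}x=\Upsilon^2(\epsilon,\Lambda_\infty)\|u-v\|^2$. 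Summing the two pieces gives exactly \eqref{ful2est} for case (i).

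Case (ii) is the degenerate instance of the same decomposition: if $\Lambda_\infty\le\epsilon$ then $y(x)\le\epsilon$ a.e., so $\Omega^+_\epsilon$ is null and only the Hölder term survives, yielding $\|f(u)-f(v)\|^2\le\mathcal H_\alpha^2(\epsilon)\|u-v\|_{L^{2\alpha}(\Omega)}^{2\alpha}$; taking square roots gives \eqref{fuvHolder}. There is no deep obstacle here — the argument is essentially bookkeeping — but the step requiring the most care is matching the different exponents and norms on the two pieces: the Hölder regime naturally produces the $L^{2\alpha}$-norm raised to the power $2\alpha$ (an unweighted integral of $|u-v|^{2\alpha}$), whereas the log-Lipschitz regime produces the ordinary $L^2$-norm, and one must resist combining them through a single Hölder inequality, which would degrade the constant. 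The only place the precise hypotheses are used is in verifying that \eqref{holderbnd} is legitimately applicable on $\Omega^-_\epsilon$, i.e.\ that $\epsilon\le\delta_\alpha$ and $|u|,|v|\le\epsilon$ hold there.
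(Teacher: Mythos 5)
Your proposal is correct and follows essentially the same route as the paper: both split $\Omega$ according to whether $|u|,|v|$ fall below or above $\epsilon$, apply the H\"older bound \eqref{holderbnd} on the small region and the log-Lipschitz bound \eqref{fuvl} with prefactor $\Upsilon(\epsilon,\Lambda_\infty)$ on the rest, and treat case (ii) as the degenerate instance where the large region is empty. The only difference is cosmetic: the paper uses a four-way decomposition ($\Omega_1,\dots,\Omega_4$) whose last three pieces are exactly your single set $\{x:\max\{|u(x)|,|v(x)|\}>\epsilon\}$, so your two-region bookkeeping via $y(x)=\max\{|u(x)|,|v(x)|\}$ merely streamlines the identical argument.
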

\begin{proof}   We first consider $\Lambda_\infty>\epsilon,$ and  decompose the domain into $\Omega=\cup_{i=1}^4\Omega_{i}$, where
\begin{equation}\label{decomOmega}
\begin{aligned}
&\Omega_1=\{x\in \Omega: 0 \le  |v|, |u| \le \epsilon \},  &&\Omega_2=\{x \in \Omega : 0 \le |v| \le \epsilon, |u| > \epsilon\}, \\
& \Omega_3=\{x \in \Omega : 0 \le |u| \le \epsilon, |v| > \epsilon \}, \;\; &&\Omega_4=\{x\in \Omega:    |u|, |v| > \epsilon\}.
\end{aligned}
\end{equation}
We obtain from  \eqref{holderbnd} that
\begin{equation}\label{case1}
\begin{split}
\int_{\Omega_1} | f(u) - f(v) |^2\, \rd x & \le \mathcal{H}_\alpha^2(\epsilon)   \int_{ \Omega_1}  |u-v|^{2\alpha}\, \rd x=
 \mathcal{H}_\alpha^2(\epsilon) \left\| u - v \right\|_{ L^{2\alpha}(\Omega_1) }^{2\alpha}.
\end{split}
\end{equation}
From \eqref{fuvl}, we have
\begin{equation*}
\begin{aligned}
\int_{\Omega_2}|f(u)-f(v)|^2\,\mathrm{d}x & \le  \int_{ \Omega_2} ( | \!\ln |u|| + 1)^2  |u-v|^2\, \rd x \\
& \le \sup_{x\in \Omega_2} \{ |\! \ln |u(x)| | + 1\}^2 \int_{\Omega_2}  |u - v|^2\, \mathrm{d} x \\
&\le  \max_{\epsilon \le y \le \|u\|_{L^\infty(\Omega_2)}}\{|\! \ln y | + 1\}^2  \int_{\Omega_2}  |u - v|^2\, \mathrm{d} x \\
&\le\big(\max_{\epsilon \le y \le \Lambda_\infty} \{| \! \ln y | +1 \}\big)^2  \|u-v\|^2_{L^2(\Omega_2)},
\end{aligned}
\end{equation*}
and similarly,
\begin{equation*}
\begin{aligned}
\int_{\Omega_3}|f(u)-f(v)|^2\,\mathrm{d}x
&\le\big(\max_{\epsilon \le y \le \Lambda_\infty} \{| \! \ln y | +1 \}\big)^2  \|u-v\|^2_{L^2(\Omega_3)}.
\end{aligned}
\end{equation*}
For clarity, we define $w= \max\{ |u|, |v|\}$  on $\Omega_4,$ and note that
\begin{equation*}\label{Omega2}
\epsilon \le w(x) \le  \max\{ \|u\|_{L^\infty (\Omega_4)}, \|u\|_{L^\infty (\Omega_4)}\}\le \Lambda_\infty.
\end{equation*}
Thus  we derive from \eqref{fuvl} that
\begin{equation*}
\begin{aligned}
\int_{\Omega_4}|f(u)-f(v)|^2\,\mathrm{d}x   & \le \int_{ \Omega_4} ( | \!\ln w | + 1)^2  |u-v|^2\, \rd x \\
&\le   \sup_{x\in \Omega_4} \{ |\! \ln w(x) | + 1\}^2  \int_{ \Omega_4}  |u-v|^2\, \rd x   \\
 &\le \big(\max_{\epsilon \le y \le \Lambda_\infty} \{| \! \ln y | +1 \}\big)^2\,  \|u-v\|^2_{L^2(\Omega_4)}.
 \end{aligned}
\end{equation*}
Summing up the above ``local" bounds, yields
\begin{equation*}
\begin{aligned}
\|f(u)-f(v)\|^2 = \sum_{i=1}^4 \int_{\Omega_i} |f(u) - f(v)|^2 \mathrm{d}x \le     {\mathcal H}^2_\alpha(\epsilon) \left \|u-v\right\|^{2\alpha}_{L^{2\alpha}(\Omega)} +
\Upsilon^2({\epsilon},\Lambda_\infty)  \, \|u-v\|^{2},
\end{aligned}
\end{equation*}
which leads to \eqref{ful2est}.

We now turn to the second case. It is clear that if  $\Lambda_\infty \le \epsilon$, then $\Omega=\Omega_1,$  i.e.,
 $\Omega_i=\emptyset $ for $i=2,3,4$ in \eqref{decomOmega}. Therefore \eqref{fuvHolder} is a direct consequence  of
\eqref{case1}.
\end{proof}

\begin{remark}\label{regularCase} {\em  It is seen from  \eqref{fuvHolder} that under the condition: $\max\{ \|u \|_{\infty}, \|v \|_{\infty} \}\le \epsilon,$  the logarithmic nonlinear term is in the ``H\"older" regime.  It can be  in ``Lipschitz" regime solely, if $|u(x)|, |v(x)|>\epsilon>0.$ Indeed, we have $\Omega=\Omega_4,$ and the above proof implies
$$\|f(u)-f(v)\| \le
\Upsilon({\epsilon},\Lambda_\infty)  \, \|u-v\|,$$
for any $\epsilon>0.$ \qed
    }
\end{remark}

\subsection{A nonlinear Gr\"{o}nwall's inequality} In the error analysis, we shall use the following Gr\"{o}nwall's inequality in accordance with the $\alpha$-H\"older regularity.
It is noteworthy  that Roshdy and  Mousa \cite{Roshdy2011Discrete} presented a discrete inequality  of Gr\"{o}nwall-Bellman type from  \eqref{disgronwalleqs1}, but it is different from \eqref{disgronwalleqs20} below.  We therefore feel compelled to sketch its proof for the readers' reference.
\begin{lemma} \label{disgronwalls}
Let $c_1, c_2, c_3$ be  positive constants and let  $\alpha\in (0,1]$. Suppose that  a sequence $\{y(n)\}$ satisfies
\begin{equation}\label{disgronwalleqs1}
y(n) \leq  c_1 + c_2\sum^{n-1}_{m=0}\limits y^{\alpha}(m) + c_3\sum^{n-1}_{m=0}\limits y(m),\quad n\geq 1. 
\end{equation}
Then we have
\begin{equation}\label{disgronwalleqs20}
\begin{split}
y(n) & \leq c_1 \bigg(1+(c_1^{\alpha-1}c_2+c_3)\frac{(1+\alpha c_1^{\alpha-1}c_2+c_3)^n-1}{\alpha c_1^{\alpha-1}c_2+c_3}\bigg),\quad n\ge 1.
 \end{split}
\end{equation}
\end{lemma}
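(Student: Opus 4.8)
The plan is to reduce the nonlinear inequality to a \emph{linear} one-step recursion, first by majorizing the right-hand side and then by linearizing the sublinear term $y^\alpha$ via the concavity of $t\mapsto t^\alpha$. First I would introduce the majorant
\[
S(n):=c_1+c_2\sum_{m=0}^{n-1}y^\alpha(m)+c_3\sum_{m=0}^{n-1}y(m),\qquad S(0):=c_1,
\]
so that $S$ is nondecreasing, $S(n)\ge c_1>0$, and, by \eqref{disgronwalleqs1} together with the (empty-sum) convention $y(0)\le c_1$, one has $0\le y(n)\le S(n)$ for all $n\ge 0$. Subtracting consecutive terms gives the exact increment $S(n+1)-S(n)=c_2\,y^\alpha(n)+c_3\,y(n)$, and since $t\mapsto t^\alpha$ is increasing and $y(n)\le S(n)$, this yields the closed recursion
\[
S(n+1)\le (1+c_3)\,S(n)+c_2\,S^\alpha(n),\qquad n\ge 0.
\]

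The key step — the one that produces the factor $\alpha$ in the stated rate rather than the cruder bound $S^\alpha(n)\le c_1^{\alpha-1}S(n)$ — is to linearize $S^\alpha$ through the tangent line to the concave function $g(t)=t^\alpha$ at $t=c_1$. Since a concave function lies below each of its tangents, for every $t\ge 0$ (hence for $t=S(n)\ge c_1$),
\[
t^\alpha \le c_1^\alpha+\alpha c_1^{\alpha-1}(t-c_1)=\alpha c_1^{\alpha-1}t+(1-\alpha)c_1^\alpha.
\]
Substituting this into the recursion produces an affine recursion $S(n+1)\le \rho\,S(n)+\beta$ with
\[
\rho:=1+c_3+\alpha c_1^{\alpha-1}c_2=1+b,\qquad \beta:=(1-\alpha)c_1^\alpha c_2,\qquad b:=\alpha c_1^{\alpha-1}c_2+c_3>0.
\]

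I would then solve this affine recursion by a routine induction: the base case $S(0)=c_1$ is immediate, and the inductive step uses $\tfrac{\rho(\rho^n-1)}{\rho-1}+1=\tfrac{\rho^{n+1}-1}{\rho-1}$ to propagate $S(n)\le \rho^n c_1+\beta\,\tfrac{\rho^n-1}{\rho-1}$ with $\rho-1=b$. Finally, writing $\rho^n c_1=c_1+c_1(\rho^n-1)$ and collecting the $(\rho^n-1)$ terms, a short computation gives $c_1+\beta/b=c_1 a/b$ with $a:=c_1^{\alpha-1}c_2+c_3$ (the numerator collapses because $\alpha c_1^\alpha c_2+(1-\alpha)c_1^\alpha c_2=c_1^\alpha c_2$), which turns the bound into
\[
S(n)\le c_1\Big(1+a\,\frac{(1+b)^n-1}{b}\Big),
\]
that is, exactly \eqref{disgronwalleqs20} after recalling $y(n)\le S(n)$. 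The main obstacle is not any single estimate but recognizing that the tangent-line (concavity) linearization, rather than the blunt estimate $S^\alpha\le c_1^{\alpha-1}S$, is precisely what recovers the sharp geometric rate $1+\alpha c_1^{\alpha-1}c_2+c_3$; the remainder is a standard affine recursion. A minor point to flag is the treatment of $n=0$: as written the hypothesis controls only $n\ge1$, so one must read \eqref{disgronwalleqs1} with vanishing empty sums at $n=0$ (giving $y(0)\le c_1$) or carry $y(0)$ as a separate boundary value. For $\alpha=1$ the linearization is an identity ($\beta=0$) and the estimate degenerates to the classical geometric Gr\"onwall bound $S(n)\le c_1(1+c_2+c_3)^n$.
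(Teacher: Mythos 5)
Your proof is correct and is essentially the paper's own argument: both introduce the partial-sum majorant (your $S(n)$ equals the paper's $c_1+x(n)$), both linearize the power term with the identical tangent-line/Bernoulli bound $t^\alpha\le c_1^\alpha+\alpha c_1^{\alpha-1}(t-c_1)$, and both then solve the resulting affine recursion with ratio $1+\alpha c_1^{\alpha-1}c_2+c_3$, whose closed form collapses to \eqref{disgronwalleqs20}. The only differences are cosmetic---you solve the affine recursion by direct induction while the paper uses the summation factor $z(n)=x(n)/\beta^n$---and your remark that \eqref{disgronwalleqs1} must be read at $n=0$ with empty sums (so $y(0)\le c_1$) is a convention the paper's proof also uses implicitly.
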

\begin{proof}
Denote
$$x(n):=c_2\sum^{n-1}_{m=0}\limits y^{\alpha}(m) + c_3\sum^{n-1}_{m=0}\limits y(m),$$
and note  $x(0)=0$. Then \eqref{disgronwalleqs1} reads $y(n)\le c_1+x(n),$ so we can derive
\begin{equation*}
\begin{aligned}
x(n+1)-x(n)&=c_2y^{\alpha}(n)+c_3y(n) \leq  c_2(c_1 + x(n))^{\alpha}+c_3(c_1 + x(n))\\
&\leq c_2(c_1^{\alpha} + \alpha c_1^{\alpha-1}x(n))+c_1c_3+c_3x(n)\\
&=(c_1^{\alpha}c_2 +c_1c_3)+(\alpha c_1^{\alpha-1}c_2+c_3) x(n),
\end{aligned}
\end{equation*}
where we used the inequality: $(1+z)^\alpha \le 1+\alpha z$ for $z\ge 0$ and  $\alpha\in (0,1].$ This implies
\begin{equation*}
x(n+1)\leq (c_1^{\alpha}c_2 +c_1c_3)+ (1+\alpha c_1^{\alpha-1}c_2+c_3)x(n).
\end{equation*}
For notational simplicity, we further define $\beta: =1+ \alpha c_1^{\alpha-1}c_2 +c_3$ and $z(n):=x(n)/\beta^n$. Write
\begin{equation*}
x(n+1)=\beta^{n+1}z(n+1)\leq {  c_1^{\alpha} c_2 + c_1 c_3  }+ \beta x(n)=c_1^{\alpha}c_2 +c_1c_3 + \beta^{n+1} z(n),
\end{equation*}
which implies
\begin{equation*}
z(n+1)-z(n)\leq\frac{c_1^{\alpha}c_2 +c_1c_3}{\beta^{n+1}},\;\;\; {\rm so}\;\;\;   z(n)\leq \sum^{n-1}_{m=0}\frac{c_1^{\alpha}c_2 +c_1c_3}{\beta^{m+1}}.
\end{equation*}
Hence, we deduce from \eqref{disgronwalleqs1} and $x(n)=\beta^n z(n)$ that
\begin{equation*}
\begin{aligned}
y(n)&\le c_1+{ x(n)}= c_1+ {   \beta^n z(n) }\le  c_1 +  (c_1^{\alpha}c_2 +c_1c_3) \beta^n \sum^{n}_{m=1}\limits\frac{1}{\beta^{m}}\\
&=c_1 +  (c_1^{\alpha}c_2 +c_1c_3)\frac{\beta^n-1}{\beta-1}=c_1\Big{(}1+(c_1^{\alpha-1}c_2 +c_3)\frac{(1+\alpha c_1^{\alpha-1}c_2+c_3)^n-1}{\alpha c_1^{\alpha-1}c_2 +c_3}\Big{)}.
\end{aligned}
\end{equation*}
This completes the proof.
\end{proof}

\begin{remark}\label{caseA} {\em Note that  $\alpha c_1^{\alpha-1}c_2+c_3\ge \alpha (c_1^{\alpha-1}c_2+c_3),$ so \eqref{disgronwalleqs20} implies
\begin{equation}\label{disgronwalleqs33}
\begin{split}
y(n) &
 \le c_1 \big(1-\alpha^{-1}+\alpha^{-1}(1+\alpha c_1^{\alpha-1}c_2+c_3)^n\big).
 \end{split}
\end{equation}
 If $\alpha=1$ and $c_2=0,$ it reduces to the usual linear Gr\"{o}nwall's inequality.
}
\end{remark}


\medskip
\section{Convergence of the first-order IMEX-FEM scheme}\label{sect2}

In this section, we describe the full discretization scheme for the LogSE \eqref{lognls0B} and conduct the convergence analysis.

\subsection{The full discretisation scheme}\label{fullDis}
For simplicity, we assume that $\Omega$ is a bounded and convex  polygonal domain with a  quasi-uniform triangulation $\Sigma_h$  with $h=\max_{\pi_h\in\Sigma_h} \limits\{\operatorname{diam} \pi_{h}\}.$
 Let $V_h^0\subset H_0^1(\Omega)$ be the finite element approximation space,
 consisting of piecewise continuous polynomials of degree $r\, (r \geq 1)$ on each element of $\Sigma_h.$ As usual, we denote the corresponding FE interpolation operator by $\mathcal{I}_h.$

The  full  discretization scheme for  \eqref{lognls0B} is to find $u_h^{n+1}\in V_h^0$
 for  $ {0\le n\le N_t-1}$ such that 
\begin{equation}\label{lognlsfulldisweak}
\begin{split}
& \ri(D_{\tau}u^{n}_{h},v_h)-(\nabla u_h^{n+1} ,\nabla v_h)=2\lambda ( f( u^{n}_{h}),v_h),  \quad\forall\, v_h \in  V_h^0,
\end{split}
\end{equation}
where $u_h^0= \mathcal{I}_h u_0$ is the FE interpolation of $u_0,$  and $D_{\tau}$ are defined in \eqref{deltafD00}. It is evident that at each time step, we only need to solve a linear Schr\"odinger equation.  Here, we restrict our attention to this first-order scheme  as
it is a simper setting to illustrate the key idea of dealing with the nonlinear term.

{ Using \cite[Lemma 3.1]{akrivis_fully_1991} and following the argument in \cite{Paraschis2023FixedpointLogSE} and \cite[sec. 2]{Bao2019Error}, we can show the unique solvability of \eqref{lognlsfulldisweak}
(see Appendix \ref{AppendixA} for the sketch of the proof).
\begin{prop}\label{ExistUnique} For any fixed $\tau,h>0$ and $ 0\le n\le N_t-1,$ the discretised problem \eqref{lognlsfulldisweak} has  a unique solution $ u_h^{n+1}  \in V_h^0.$
\end{prop}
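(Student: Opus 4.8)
The plan is to take advantage of the Implicit--Explicit structure of \eqref{lognlsfulldisweak}: the logarithmic nonlinearity is sampled at the \emph{known} level $n$, so that, for a given $u_h^n\in V_h^0$, the determination of $u_h^{n+1}$ is in fact a \emph{linear} variational problem. Inserting $D_\tau u_h^n=(u_h^{n+1}-u_h^n)/\tau$ from \eqref{deltafD00} and collecting all $u_h^n$-terms on the right, \eqref{lognlsfulldisweak} becomes the problem of finding $u_h^{n+1}\in V_h^0$ with
\[
a(u_h^{n+1},v_h)=L(v_h),\qquad \forall\,v_h\in V_h^0,
\]
where
\[
a(w,v):=\frac{\ri}{\tau}(w,v)-(\nabla w,\nabla v),\qquad
L(v):=\frac{\ri}{\tau}(u_h^n,v)+2\lambda\,(f(u_h^n),v).
\]
Before anything else, I would verify that $L$ is a well-defined (bounded) functional on $V_h^0$. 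Since $u_h^n$ is a continuous piecewise polynomial on the bounded domain $\Omega$, it lies in $L^\infty(\Omega)$; taking $v=0$ in \eqref{fuvl} of Lemma \ref{Holderlmm} and using $f(0)=0$ gives the pointwise bound $|f(u_h^n)|\le(|\!\ln|u_h^n||+1)\,|u_h^n|$, and because $t\mapsto t(|\!\ln t|+1)$ extends continuously to $t=0$ it is bounded on $[0,\|u_h^n\|_\infty]$. Hence $f(u_h^n)\in L^\infty(\Omega)\subset L^2(\Omega)$ and $L$ is continuous.

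Since $V_h^0$ is finite dimensional, I would reduce existence and uniqueness for this linear problem to the injectivity of the map $w\mapsto a(w,\cdot)$, so that it suffices to show that the homogeneous equation $a(w,v_h)=0$ for all $v_h\in V_h^0$ forces $w=0$. The key step is to test with $v_h=w$, giving
\[
a(w,w)=\frac{\ri}{\tau}\|w\|^2-\|\nabla w\|^2=0,
\]
and then to read off the imaginary part: as $\|w\|^2$ and $\|\nabla w\|^2$ are real, this yields $\tau^{-1}\|w\|^2=0$, i.e. $w=0$. Injectivity then gives bijectivity in finite dimensions, so \eqref{lognlsfulldisweak} possesses a unique solution $u_h^{n+1}\in V_h^0$; starting from $u_h^0=\mathcal{I}_h u_0$, an induction on $n$ propagates solvability to every $0\le n\le N_t-1$.

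I do not anticipate a serious obstacle. The essential point is structural: unlike the \emph{implicit} Crank--Nicolson discretizations of Paraschis--Zouraris and Bao et al., whose genuinely nonlinear per-step problems require a Brouwer-type fixed-point argument (the cited lemma of Akrivis et al.), the explicit treatment of $f(u_h^n)$ collapses each step to a linear problem, for which the one-line nondegeneracy computation above is self-contained. The only place demanding a little care is the admissibility of the right-hand side, namely $f(u_h^n)\in L^2(\Omega)$, which rests on the continuity of $f$ at the origin (via $f(0)=0$) together with the boundedness of the finite element iterate; if one prefers to mirror the references verbatim, the same imaginary-part identity supplies exactly the nondegeneracy input needed to invoke the Akrivis-type solvability lemma directly.
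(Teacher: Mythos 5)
Your proposal is correct, but it follows a genuinely different route from the paper's own proof in Appendix \ref{AppendixA}. There, the authors keep the template used for \emph{implicit} discretizations in \cite{Paraschis2023FixedpointLogSE} and \cite{Bao2019Error}: they apply the Brouwer-type solvability lemma of \cite[Lemma 3.1]{akrivis_fully_1991} to the map $g(v)= v-\ri\tau\Delta v+2\ri\lambda\tau f(u_h^n)-u_h^n$, which requires exhibiting a radius $\sigma$ with $\Re(g(v),v)\ge 0$ on the sphere $\|v\|=\sigma$; producing $\sigma$ leads them to bound $\tau\|f(u_h^n)\|$ via the inequality $\ln x\le x^{\eta}/(\eta e)$, a Gagliardo--Nirenberg interpolation inequality, and the inverse inequality $\|\nabla u_h^n\|\le Ch^{-1}\|u_h^n\|$; uniqueness is then shown separately, by exactly the same test-with-the-difference computation you use for injectivity. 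Your proof instead exploits the IMEX structure directly: since $f$ is sampled at level $n$, each step is a square linear system, and the single identity $a(w,w)=\ri\tau^{-1}\|w\|^2-\|\nabla w\|^2=0\Rightarrow w=0$ yields injectivity, hence bijectivity in finite dimensions, giving existence and uniqueness in one stroke with no fixed-point machinery. Your admissibility check is also leaner and fully sufficient: a finite element function is bounded, and the bound $|f(z)|\le |z|(|\ln|z||+1)$ together with $f(0)=0$ gives $f(u_h^n)\in L^\infty(\Omega)\subset L^2(\Omega)$, which replaces the paper's interpolation/inverse-inequality estimate (whose extra payoff is only an explicit a priori radius $\|v_*\|\le\sigma_{\tau,h}$, not used later since the uniform bound on $u_h^m$ is re-derived in Theorem \ref{enzhao0}). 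What the paper's route buys is robustness: it survives verbatim if the nonlinearity were treated implicitly, where the per-step problem is genuinely nonlinear and your linear-algebra shortcut is unavailable; what yours buys is brevity and the structural observation that the linearized scheme needs none of that machinery.
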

}
 \subsection{Useful lemmas}
 Throughout this paper, we denote by $C$ a generic positive constant independent of  $h,\tau$ and $u.$
Define the Ritz projection operator $R_{h}: H_0^1(\Omega)\to V_h^0$ as in \cite[Chapter 1]{Thomee2006Galerkin}:
\begin{equation}\label{projectionu}
(\nabla R_hu,\nabla \phi) = (\nabla u,\nabla \phi), \quad \forall\, \phi \in  V_h^0.
\end{equation}
The following  estimates and inequalities  can be found from various resources, see e.g.,
 \cite[Lemma 1.1 and (1.11)]{Thomee2006Galerkin} and  \cite[Theorem 4.4.20]{Susanne2008Book}.
\begin{lemma}\label{FEMtheory}
 For any $u\in H^1_0(\Omega)\cap H^s(\Omega)$, 
 we have
\begin{eqnarray}
& \|R_{h}u-u \| + h\|\nabla (R_{h}u-u)\|  \leq Ch^{s}\|u\|_{H^s(\Omega)},  \;  &  1 \le s \le r+1, \label{uexbound}\\
&\| \mathcal{I}_h u-u \| + h \|\nabla (\mathcal{I}_h u-u) \| \leq Ch^s\|u\|_{H^s(\Omega)},  \;  &  1 \le s \le r+1, \label{uexbound1}\\
&  \|R_{h}u-u \|_\infty+\|  \mathcal{I}_{h} u -u\|_{ \infty} \le C h^{s-\frac{d}{2} } \| u \|_{H^s(\Omega)}, \; &  \frac{d}{2} \le s \le r+1. \label{Linftybound}
\end{eqnarray}
Moreover, there holds the inverse inequality
\begin{equation}\label{inverseineq}
\|\phi\|_{\infty} \leq C h^{-\frac{d}{2}}\|\phi\|, \quad \forall\, \phi \in V_h^0.
\end{equation}
\end{lemma}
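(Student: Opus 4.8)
Since Lemma \ref{FEMtheory} collects classical finite element estimates (as the author signals by citing \cite{Thomee2006Galerkin,Susanne2008Book}), the plan is to indicate the standard arguments rather than reproduce the routine computations. The interpolation bounds \eqref{uexbound1} and the inverse inequality \eqref{inverseineq} are obtained by the usual scaling-and-reference-element technique: one maps each element $\pi_h\in\Sigma_h$ to a fixed reference simplex, applies the Bramble--Hilbert lemma (for the interpolation error) or the equivalence of norms on the finite-dimensional polynomial space (for the inverse estimate), and then scales back, using the quasi-uniformity of $\Sigma_h$ to convert local mesh sizes into the single parameter $h$.

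For the Ritz projection bounds \eqref{uexbound} I would first treat the $H^1$-seminorm. By definition \eqref{projectionu}, $R_h u$ is the orthogonal projection of $u$ onto $V_h^0$ with respect to the Dirichlet form $(\nabla\cdot,\nabla\cdot)$, so C\'ea's lemma gives $\|\nabla(R_h u-u)\|\le\|\nabla(v_h-u)\|$ for every $v_h\in V_h^0$; choosing $v_h=\mathcal I_h u$ and invoking \eqref{uexbound1} yields $\|\nabla(R_h u-u)\|\le Ch^{s-1}\|u\|_{H^s(\Omega)}$. The $L^2$-bound then follows from the Aubin--Nitsche duality argument: let $\phi$ solve $-\Delta\phi=R_h u-u$ in $\Omega$ with $\phi|_{\partial\Omega}=0$. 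Since $\Omega$ is convex polygonal, elliptic regularity gives $\|\phi\|_{H^2(\Omega)}\le C\|R_h u-u\|$; using the Galerkin orthogonality \eqref{projectionu} to subtract $R_h\phi$, one finds $\|R_h u-u\|^2=(\nabla(R_h u-u),\nabla(\phi-R_h\phi))\le\|\nabla(R_h u-u)\|\,\|\nabla(\phi-R_h\phi)\|\le Ch^{s-1}\|u\|_{H^s(\Omega)}\cdot Ch\|\phi\|_{H^2(\Omega)}$, and dividing by $\|R_h u-u\|$ gives \eqref{uexbound}.

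Finally, for the $L^\infty$ bounds \eqref{Linftybound}, the cleanest route (consistent with the deliberately non-sharp exponent $s-\tfrac d2$ appearing there) is to reduce to the $L^2$ estimates via the inverse inequality, rather than to invoke sharp maximum-norm theory. The interpolation part $\|\mathcal I_h u-u\|_\infty\le Ch^{s-d/2}\|u\|_{H^s(\Omega)}$ is again a local scaling estimate, valid for $s>\tfrac d2$ so that $H^s(\Omega)\hookrightarrow L^\infty(\Omega)$. For the Ritz part I would split $R_h u-u=(R_h u-\mathcal I_h u)+(\mathcal I_h u-u)$, bound the second piece as above, and control the discrete first piece by \eqref{inverseineq}, namely $\|R_h u-\mathcal I_h u\|_\infty\le Ch^{-d/2}\|R_h u-\mathcal I_h u\|\le Ch^{-d/2}\big(\|R_h u-u\|+\|u-\mathcal I_h u\|\big)\le Ch^{s-d/2}\|u\|_{H^s(\Omega)}$, where the last step uses \eqref{uexbound} and \eqref{uexbound1}.

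I would expect the only genuinely delicate point to be the elliptic-regularity input to the duality step: this is precisely where the convexity of the polygonal domain $\Omega$ is used, ensuring the $H^2$-regularity of the dual solution $\phi$ and hence the optimal $L^2$ rate. The maximum-norm estimate, by contrast, is elementary here exactly because the stated exponent $s-\tfrac d2$ is suboptimal, so no Schatz--Wahlbin weighted-norm machinery is required.
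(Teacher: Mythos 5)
Your proposal is correct and follows essentially the same route as the paper: the paper simply cites \cite{Thomee2006Galerkin} and \cite{Susanne2008Book} for \eqref{uexbound}--\eqref{inverseineq} (the classical C\'ea/Aubin--Nitsche and scaling arguments you spell out), and its only actual derivation---the Ritz part of \eqref{Linftybound} via the splitting $R_hu-u=(R_hu-\mathcal{I}_hu)+(\mathcal{I}_hu-u)$, the inverse inequality \eqref{inverseineq}, and \eqref{uexbound}---is exactly the argument you give. Your observation that the suboptimal exponent $s-\tfrac d2$ is what makes sharp maximum-norm theory unnecessary is also precisely the point of the paper's remark following the lemma.
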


The result on the interpolation in  \eqref{Linftybound} is stated in \cite[Theorem 4.4.20]{Susanne2008Book}. The same estimate for the Ritz projection $ \|R_{h}u-u \|_\infty$ can be derived straightforwardly from the inverse inequality \eqref{inverseineq}  and \eqref{uexbound}.

%


 The Ritz projection is almost stable in the  $L^{\infty}$-norm.
\begin{lemma}[see Lemma 5 in \cite{Leykekhman2016}] \label{FEMtheory2}
For any $u \in L^{\infty}(\Omega) \cap H_{0}^{1}(\Omega)$, we have
\begin{equation}\label{Ritzstable}
\big\|R_{h} u\big\|_{\infty} \leq C  \ell_{h}  \|u\|_{\infty},
\end{equation}
and 
\begin{equation}\label{ellh}
	\ell_{h} =
	\begin{dcases}
		1 + |\!\ln h|, & \;{\rm if} \;\; r =1,\\
		1, & \;{ \rm if}\;\; r \ge 2.
	\end{dcases}
\end{equation}

 \end{lemma}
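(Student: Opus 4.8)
The bound \eqref{Ritzstable} is the classical maximum-norm (quasi-)stability of the elliptic/Ritz projection, the logarithmic factor $\ell_h$ for $r=1$ being the well-known artifact of lowest-order elements in dimension $d\ge2$. \emph{The plan is to realise $\|R_h u\|_\infty$ as a single point value, represent that value by duality against a regularised Green's function and its own Ritz projection, and then reduce the entire estimate to a weighted $L^1$ bound on the discretised Green's function.} Since $R_h u\in V_h^0$ is continuous on the compact $\bar\Omega$, we may fix $x_0\in\Omega$ with $\|R_h u\|_\infty=|R_h u(x_0)|$. Following the Nitsche--Schatz--Rannacher--Scott machinery, I would first construct a smoothed Dirac mass $\delta=\delta_{x_0}\in C_0^\infty(\Omega)$, supported in a ball of radius $\sim h$ about $x_0$, normalised so that $(\chi,\delta)=\chi(x_0)$ for every $\chi\in V_h^0$ and $\|\delta\|_{L^1(\Omega)}\le C$; such a functional exists by the usual construction on the reference element together with the quasi-uniformity of $\Sigma_h$.

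Next I would introduce the regularised Green's function $G\in H_0^1(\Omega)$ solving $-\Delta G=\delta$ and its Ritz projection $G_h:=R_h G\in V_h^0$, which by \eqref{projectionu} satisfies $(\nabla G_h,\nabla\chi)=(\nabla G,\nabla\chi)=\chi(x_0)$ for all $\chi\in V_h^0$. Testing with $\chi=R_h u$ and using the defining orthogonality \eqref{projectionu} of $R_h$ yields the representation
\[
R_h u(x_0)=(\nabla R_h u,\nabla G_h)=(\nabla u,\nabla G_h).
\]
Because $u\in H_0^1(\Omega)$ is only rough while $G_h$ is piecewise polynomial, I would transfer the derivative off $u$ by an element-by-element integration by parts, obtaining
\[
R_h u(x_0)=-(u,\Delta_h G_h)-\sum_{e}\int_e u\,[\partial_n G_h]\,\mathrm{d}s,
\]
where $\Delta_h$ is the piecewise Laplacian (identically zero when $r=1$) and $[\partial_n G_h]$ denotes the inter-element normal-derivative jumps. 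Hence $|R_h u(x_0)|\le\|u\|_\infty\big(\|\Delta_h G_h\|_{L^1(\Omega)}+\sum_e\|[\partial_n G_h]\|_{L^1(e)}\big)$, and the lemma reduces to showing that this bracket is bounded by $C\ell_h$. The continuous analogue $(\nabla u,\nabla G)=(u,\delta)\le C\|u\|_\infty$ (with $\|\delta\|_{L^1}\le C$) shows that these discrete quantities are the right surrogates and that the $\ell_h$ factor is purely a defect of the discretisation for $r=1$.

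To estimate the discrete Green's function in $L^1$ I would deploy the weighted-norm (dyadic) technique: introduce the weight $\sigma(x)=(|x-x_0|^2+(Kh)^2)^{1/2}$ and decompose $\Omega$ into the innermost ball together with dyadic annuli $A_j=\{2^{j-1}Kh\le|x-x_0|\le 2^jKh\}$ for $1\le j\lesssim|\!\ln h|$. On each annulus a local energy estimate of Nitsche duality type, combined with the interior decay of $G$ away from $\mathrm{supp}\,\delta$ and a superapproximation argument invoking the inverse inequality \eqref{inverseineq} and the approximation properties of Lemma \ref{FEMtheory}, yields a bound on $G_h$ and its jumps exhibiting geometric decay in $j$. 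Summing over the $O(|\!\ln h|)$ annuli then reproduces exactly the factor $\ell_h$ when $r=1$, whereas for $r\ge2$ the extra approximation power on the innermost region renders the series convergent with an $h$-independent sum, giving $\ell_h=1$. Combining this with the pointwise representation above completes the proof.

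\emph{The main obstacle} is precisely this weighted estimate for the discretised Green's function: establishing the geometric decay of the local energy of $G_h$ across the annuli (superapproximation together with Nitsche duality on quasi-uniform meshes) and tracking the constants carefully enough to see that the dyadic sum produces a genuine $|\!\ln h|$ for $r=1$ but stays bounded for $r\ge2$. The remaining ingredients — the reproducing smoothed delta, the Galerkin orthogonality \eqref{projectionu}, and the inverse inequality \eqref{inverseineq} — are routine; the delicate modelling point is that the duality must be arranged so that only $\|u\|_\infty$ ever appears, with no derivative or interpolant of the merely $L^\infty(\Omega)\cap H_0^1(\Omega)$ datum $u$. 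A complete treatment is given in \cite{Leykekhman2016}, and the underlying elliptic-projection estimates may be found in \cite{Thomee2006Galerkin}.
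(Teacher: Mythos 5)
The paper contains no proof of this lemma to compare against: it is quoted directly from the literature (Lemma 5 of \cite{Leykekhman2016}), and the authors use it as a black box. Your outline is a faithful reconstruction of the standard Schatz--Wahlbin/Rannacher--Scott/Leykekhman--Vexler argument underlying that citation, and the individual steps are sound: the reproducing smoothed Dirac $\delta$ with $(\chi,\delta)=\chi(x_0)$ on $V_h^0$, the regularised Green's function $G$ and its Ritz projection $G_h$, the representation $R_hu(x_0)=(\nabla R_hu,\nabla G_h)=(\nabla u,\nabla G_h)$ via the Galerkin orthogonality \eqref{projectionu}, the elementwise integration by parts arranged so that only $\|u\|_\infty$ (never $\nabla u$ or an interpolant of $u$) appears --- which is exactly the right way to handle a datum that is merely $L^\infty(\Omega)\cap H_0^1(\Omega)$ --- and the reduction to the weighted $L^1$ bound $\|\Delta_h G_h\|_{L^1(\Omega)}+\sum_e\|[\partial_n G_h]\|_{L^1(e)}\le C\ell_h$, with the correct mechanism for the dichotomy in \eqref{ellh}: for $r=1$ the local energy estimates on the $O(|\!\ln h|)$ dyadic annuli are borderline and each annulus contributes $O(1)$, whereas for $r\ge2$ the contributions decay geometrically and the sum is $h$-independent. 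Two caveats are worth recording. First, what you present is a roadmap rather than a proof: the dyadic decay estimate that you explicitly flag as the main obstacle is where essentially all of the work lies, so your write-up is acceptable here only in the same sense that the paper's citation is --- neither is self-contained. Second, the specific point of \cite{Leykekhman2016} is that $\Omega$ is only a convex \emph{polyhedral} domain, which is the paper's standing assumption in its FEM setting; your sketch invokes ``interior decay of $G$ away from $\mathrm{supp}\,\delta$,'' but near edges and corners of a polyhedron the Green's function does not enjoy the interior-type regularity that classical annulus arguments (valid on smooth or two-dimensional polygonal domains) presuppose, and supplying the requisite H\"older and weighted Green's function estimates on convex polyhedra is precisely the technical contribution of the cited reference. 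So your plan is the right one and consistent with the source, but a complete execution would have to import those boundary estimates rather than the purely interior machinery your sketch suggests.
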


We have the following bound of the truncation error and sketch the derivation in Appendix  \ref{AppendixC} for  better readability of the paper.
\begin{lemma}\label{Tnorder}
Let
\begin{equation}\label{ucnA}
u\in C^2([0,T];L^2(\Omega))\cap C^1([0,T];H^2(\Omega)),
\end{equation}
and  $\mathcal{T}^{n}$ be the truncation error given by
\begin{equation}\label{trunT}
\mathcal{T}^{n}:=\ri({D}_{\tau}u^{n}- u^{n}_{t})+\Delta({u}^{n+1} - u^n),
\end{equation}
where we understand
$$
{ u^{0}_{t}(x)=\partial_t u(x,0)=\ri (\Delta u_0(x)-\lambda u_0(x)\ln|u_0(x)|^2).}
$$
Then  for all $0\le n\le N_t-1,$ we have
\begin{equation}\label{tnest}
\|\mathcal{T}^{n}\|^2\le \frac{2}{3} \tau^2\|u_{tt}\|_{L^\infty([t_n,t_{n+1}];L^2(\Omega))}^2+2\tau^2 \| u_{t}\|_{L^\infty( [t_n,t_{n+1}];H^2(\Omega))}^2.
\end{equation}
\end{lemma}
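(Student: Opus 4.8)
The plan is to write each of the two pieces of $\mathcal{T}^n$ as an exact integral remainder and then estimate it in $L^2(\Omega)$ by the Cauchy--Schwarz inequality in the time variable, combining the two contributions at the end through the elementary bound $\|a+b\|^2\le 2\|a\|^2+2\|b\|^2$. The regularity hypothesis \eqref{ucnA} is precisely what makes the remainders well defined and guarantees that the two norms $\|u_{tt}\|_{L^\infty([t_n,t_{n+1}];L^2(\Omega))}$ and $\|u_t\|_{L^\infty([t_n,t_{n+1}];H^2(\Omega))}$ appearing in \eqref{tnest} are finite.

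For the first piece, since $|\ri|=1$, I would use Taylor's formula with integral remainder,
\begin{equation*}
u^{n+1}=u^{n}+\tau\, u^{n}_{t}+\int_{t_n}^{t_{n+1}}(t_{n+1}-s)\,u_{tt}(s)\,\rd s,
\end{equation*}
so that $D_{\tau}u^{n}-u^{n}_{t}=\tau^{-1}\int_{t_n}^{t_{n+1}}(t_{n+1}-s)\,u_{tt}(s)\,\rd s$. Applying Cauchy--Schwarz pointwise in $x$ with the weight $(t_{n+1}-s)$, and using $\int_{t_n}^{t_{n+1}}(t_{n+1}-s)^2\,\rd s=\tau^3/3$, then integrating over $\Omega$, I obtain
\begin{equation*}
\|D_{\tau}u^{n}-u^{n}_{t}\|^2\le \frac{\tau}{3}\int_{t_n}^{t_{n+1}}\|u_{tt}(s)\|^2\,\rd s\le \frac{\tau^2}{3}\,\|u_{tt}\|^2_{L^\infty([t_n,t_{n+1}];L^2(\Omega))}.
\end{equation*}

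For the second piece I would write $\Delta(u^{n+1}-u^{n})=\int_{t_n}^{t_{n+1}}\Delta u_t(s)\,\rd s$ by the fundamental theorem of calculus, and again apply Cauchy--Schwarz in $s$ (now with weight $1$, producing the factor $\tau$) followed by integration over $\Omega$, giving
\begin{equation*}
\|\Delta(u^{n+1}-u^{n})\|^2\le \tau\int_{t_n}^{t_{n+1}}\|\Delta u_t(s)\|^2\,\rd s\le \tau^2\,\|u_t\|^2_{L^\infty([t_n,t_{n+1}];H^2(\Omega))},
\end{equation*}
where I use the standard bound $\|\Delta u_t(s)\|\le \|u_t(s)\|_{H^2(\Omega)}$. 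Combining the two estimates via $\|\mathcal{T}^n\|^2\le 2\|D_{\tau}u^{n}-u^{n}_{t}\|^2+2\|\Delta(u^{n+1}-u^{n})\|^2$ then yields \eqref{tnest} with the exact constants $2/3$ and $2$.

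There is no real analytical obstacle here; the work is essentially bookkeeping, and the only points meriting attention concern the constants and the endpoint case. First, to reproduce the stated constants one must apply Cauchy--Schwarz in time with the correct weights $(t_{n+1}-s)$ and $1$ respectively, since these are what generate the factors $\tau^3/3$ and $\tau$ that, after the final $2\|a\|^2+2\|b\|^2$ split, give $2/3$ and $2$. Second, the case $n=0$ must be read through the convention $u^{0}_{t}=\partial_t u(\cdot,0)=\ri(\Delta u_0-\lambda u_0\ln|u_0|^2)$ specified in the statement; this is consistent with the Taylor remainder above, so the estimate holds uniformly for all $0\le n\le N_t-1$.
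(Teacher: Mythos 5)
Your proposal is correct and follows essentially the same route as the paper's own proof in the appendix: Taylor expansion with integral remainder for $\ri(D_\tau u^n - u^n_t)$, the fundamental theorem of calculus for $\Delta(u^{n+1}-u^n)$, Cauchy--Schwarz in time with the appropriate weights, and the split $\|a+b\|^2\le 2\|a\|^2+2\|b\|^2$, yielding the exact constants $2/3$ and $2$. The only cosmetic difference is that the paper rescales the remainder integrals to the unit interval via $t=t_n+s\tau$, whereas you keep them on $[t_n,t_{n+1}]$; the estimates are identical.
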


\subsection{Error estimates}
 The key step of the analysis  is to show the uniform boundedness of the numerical solution.
\begin{thm}\label{enzhao0} Assume that the solution of the LogSE
\eqref{lognls0B} has the regularity
\begin{equation}\label{ucnAB}
u\in C^2([0,T];L^2(\Omega))\cap C^1([0,T];H^1_0(\Omega)\cap H^{r+1}(\Omega)),\quad r\ge 1.
\end{equation}
If $0<\tau,h^{r+1}\le e^{-1}$ and $\tau\le Ch^{d/2},$ then
 the solution of the scheme \eqref{lognlsfulldisweak} satisfies
\begin{equation}\label{logbndA}
 \|u_h^m\|_{ \infty }  \le C_{u}, \quad   m=0,1,\cdots, N_t,
\end{equation}
where  $C_u$ is a generic positive constant independent of $\tau,h,$ but depends on the norms of $u$ in \eqref{ucnAB} and
the parameters $|\lambda|, |\Omega|,  T.$
\end{thm}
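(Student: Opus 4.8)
The plan is to establish \eqref{logbndA} by an induction (bootstrap) on the time level $n$, the engine being the inverse inequality \eqref{inverseineq}, which turns an $L^2$ error bound into the required $L^\infty$ bound. I would fix once and for all $C_u:=\|u\|_{C([0,T];L^\infty(\Omega))}+1$ and carry the a priori hypothesis $\|u_h^m\|_\infty\le C_u$ for all $m\le n$; the base case $m=0$ follows from $u_h^0=\mathcal I_h u_0$ and the interpolation bound \eqref{Linftybound}. The sole purpose of the a priori bound is to control $\Lambda_\infty=\max\{\|u_h^n\|_\infty,\|u^n\|_\infty\}\le C_u$, so that $\Upsilon(\epsilon,\Lambda_\infty)$ and $\mathcal H_\alpha(\epsilon)$ from Theorem \ref{N2L2} are \emph{fixed} constants for a fixed choice of $\epsilon\le\delta_\alpha$. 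Recovering the same constant $C_u$ at level $n+1$ will require $\tau,h^{r+1}$ small and the coupling $\tau\le Ch^{d/2}$ with the constant $C$ eventually chosen small.

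First I would split $u_h^n-u^n=e^n+\theta^n$, where $e^n:=u_h^n-R_hu^n\in V_h^0$ and $\theta^n:=R_hu^n-u^n$, the latter controlled by \eqref{uexbound}. Subtracting the scheme \eqref{lognlsfulldisweak} from the consistency identity for the exact solution (Lemma \ref{Tnorder}) and invoking \eqref{projectionu} to annihilate $\nabla\theta^{n+1}$ against $V_h^0$, I obtain an error equation for $e^n$, test it with $v_h=e^{n+1}$, and take imaginary parts. The term $\ri(D_\tau e^n,e^{n+1})$ produces $\tfrac1{2\tau}\big(\|e^{n+1}\|^2-\|e^n\|^2\big)$ while $\|\nabla e^{n+1}\|^2$ is real and disappears; the data terms $\ri(D_\tau\theta^n,e^{n+1})$ and $(\mathcal T^n,e^{n+1})$ are bounded by \eqref{uexbound} and Lemma \ref{Tnorder}, contributing $O(\tau+h^{r+1})$ after summation.

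The heart of the proof is the nonlinear contribution $\Im\big[2\lambda(f(u_h^n)-f(u^n),e^{n+1})\big]$. Since the scheme is explicit in $f$, the nonlinearity sits at $t_n$ but is tested against $e^{n+1}$, so I would write $e^{n+1}=\xi^n-\theta^n+(e^{n+1}-e^n)$ with $\xi^n=u_h^n-u^n$ and treat the pieces separately. The diagonal piece $\Im[2\lambda(f(u_h^n)-f(u^n),\xi^n)]$ I would control by the pointwise estimate \eqref{LipSC}, which yields an $O(|\lambda|)$ Lipschitz bound $\lesssim\|\xi^n\|^2$ with \emph{no} logarithmic factor; this is precisely what keeps the amplification constant of the discrete Grönwall inequality bounded. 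The remaining pieces, paired with the small quantities $\theta^n$ and the increment $e^{n+1}-e^n$, I would estimate through Theorem \ref{N2L2}, splitting $f(u_h^n)-f(u^n)$ into its Hölder part $\mathcal H_\alpha(\epsilon)\|\xi^n\|_{L^{2\alpha}(\Omega)}^{\alpha}$ and its Lipschitz part $\Upsilon(\epsilon,\Lambda_\infty)\|\xi^n\|$, using $\|\xi^n\|_{L^{2\alpha}(\Omega)}\le|\Omega|^{\frac1{2\alpha}-\frac12}\|\xi^n\|$ and Young's inequality with an $O(\tau)$ weight (absorbing the resulting $\|e^{n+1}\|^2$). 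After multiplying by $2\tau$ and summing, the inequality takes exactly the form \eqref{disgronwalleqs1} of the nonlinear Grönwall inequality; Lemma \ref{disgronwalls}, in the form \eqref{disgronwalleqs33}, then gives $\|e^{n+1}\|\le C(\tau+h^{r+1})$ with $C$ independent of $\tau,h,n$, provided $\alpha$ is taken close enough to $1$ (so that $(r+1)\alpha\ge d/2$ and the Hölder coefficient $c_2$ contributes a harmless amplification).

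The main obstacle, and the step demanding the most care, is keeping every Grönwall constant independent of $\tau$ and $h$. Estimating $\|f(u_h^n)-f(u^n)\|$ by the Hölder bound alone forces the coefficient of $\sum y^\alpha$ to be $O(\tau)$ and then the factor $c_1^{\alpha-1}=c_1^{-(1-\alpha)}$ in \eqref{disgronwalleqs33} diverges as $c_1\to0$; conversely, letting $\epsilon\to0$ makes $\Upsilon\sim|\ln\epsilon|$ blow up the exponent $c_3N_t$. Both pathologies are avoided only by isolating the genuinely $O(1)$-Lipschitz diagonal via \eqref{LipSC} and confining every Hölder (and every $\Upsilon$) contribution to terms that carry an additional small multiplier ($\theta^n$, the time increment, or an absorbed $\|e^{n+1}\|^2$); I expect the honest handling of the increment $e^{n+1}-e^n$ (either via an auxiliary $O(\tau)$ bound or a summation by parts in $n$) to be the most delicate point. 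Finally I would close the bootstrap: from \eqref{inverseineq}, \eqref{uexbound}–\eqref{Linftybound} and $\|u_h^{n+1}\|_\infty\le\|\mathcal I_hu^{n+1}\|_\infty+Ch^{-d/2}\big(\|e^{n+1}\|+\|R_hu^{n+1}-\mathcal I_hu^{n+1}\|\big)$, the bound $\|e^{n+1}\|\le C(\tau+h^{r+1})$ together with $\tau\le Ch^{d/2}$ and $r+1\ge d/2$ yields $\|u_h^{n+1}\|_\infty\le\|u\|_{C([0,T];L^\infty(\Omega))}+1=C_u$ once the constant in $\tau\le Ch^{d/2}$ is small enough, which completes the induction.
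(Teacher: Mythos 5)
Your skeleton coincides with the paper's own proof: induction on the time level with an $L^\infty$ bootstrap through the inverse inequality \eqref{inverseineq}, the Ritz splitting, testing the error equation with $e^{n+1}$ and taking imaginary parts, the pointwise bound \eqref{LipSC} on the ``diagonal'' pairing, Theorem \ref{N2L2} for the remaining pieces, and the nonlinear Gr\"onwall inequality of Lemma \ref{disgronwalls}. The genuine gap sits exactly at the step you flag as the crux: how the H\"older and logarithmic factors enter the Gr\"onwall constants. Because you test the \emph{whole} difference $f(u_h^n)-f(u^n)$ against $e^{n+1}=\xi^n-\theta^n+(e^{n+1}-e^n)$, you create the cross term $\big(f(u_h^n)-f(u^n),\theta^n\big)$, which couples the unknown error to data. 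With your prescribed $O(\tau)$ Young weight this leaves a residue $\tau^{-1}\|\theta^n\|^2$ per step, which after multiplying by $2\tau$ and summing over the $N_t\sim T/\tau$ steps accumulates to $CT\tau^{-1}h^{2(r+1)}$ in $c_1$, i.e.\ an error contribution of size $\tau^{-1/2}h^{r+1}$; since the hypotheses provide only \emph{upper} bounds on $\tau$, this is uncontrollable (take $\tau=h^{100}$, say), and the bootstrap $h^{-d/2}\|e^{n+1}\|\lesssim 1$ fails whenever $\tau\ll h^{2(r+1)-d}$. If you instead use an $O(1)$ weight there, the coefficient of $\sum\|e^n\|^{2\alpha}$ becomes $O(\tau)\,\mathcal H_\alpha^2(\epsilon)$, and the exponent in \eqref{disgronwalleqs33} then contains $T\,\mathcal H_\alpha^2(\epsilon)\,c_1^{\alpha-1}$, which for your \emph{fixed} $\epsilon$ diverges as $c_1\to 0$ --- precisely the pathology you name, and which your plan does not actually escape. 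For the same reason the claimed output $\|e^{n+1}\|\le C(\tau+h^{r+1})$ does not follow from your steps: a fixed-$\epsilon$ splitting cannot produce the full power $h^{r+1}$, only $h^{\alpha(r+1)}$-type rates at best.

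The paper closes these holes with two devices missing from your proposal. First, it decomposes the \emph{nonlinearity} rather than the test function: $\mathcal N_h^{n}=2\big(f(R_hu^{n})-f(u_h^{n})\big)$ is unknown-dependent and is paired only with $e_h^n$ (via \eqref{LipSC}) and with the increment $e_h^{n+1}-e_h^n$ (via Young with weight $\tau$, absorbed by the term $\|e_h^{n+1}-e_h^n\|^2/(2\tau)$ on the left of \eqref{realpartA} --- the very term your identity drops and then struggles to replace), while $\widehat{\mathcal N}_h^{n}=2\big(f(u^{n})-f(R_hu^{n})\big)$ is pure data and is paired with $e_h^{n+1}$ at $O(1)$ weight; no unknown-times-data cross term with a bad weight ever appears. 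Second, the thresholds in Theorem \ref{N2L2} are chosen \emph{parameter-dependently}: $\epsilon=\tau$ for the unknown-dependent part and $\hat\epsilon=h^{r+1}$ for the data part. Then $\mathcal H_\alpha(\tau)\sim\tau^{1-\alpha}|\!\ln\tau|$ supplies exactly the power that neutralizes $c_1^{\alpha-1}\sim\tau^{2(\alpha-1)}$ (the H\"older contribution to the exponent becomes $\sim T\tau|\!\ln\tau|^2\to 0$), and $\mathcal H_\alpha(h^{r+1})\,h^{\alpha(r+1)}\sim h^{r+1}|\!\ln h|$ restores near-optimal order in the data. Finally, the direction of your choice of $\alpha$ is the opposite of the paper's: since $\delta_\alpha=e^{\alpha/(\alpha-1)}\to 0$ as $\alpha\to 1^-$, pushing $\alpha$ close to $1$ shrinks the admissible range of $\epsilon,\hat\epsilon$ (hence of $\tau,h^{r+1}$ once they are identified with these thresholds) and inflates $\Upsilon\sim|\!\ln\epsilon|$ in the exponent; the paper takes $\alpha=1/2$ to maximize $\delta_{1/2}=e^{-1}$, which is exactly where the hypothesis $0<\tau,h^{r+1}\le e^{-1}$ in the statement comes from.
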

\begin{proof} For notational convenience, we denote
\begin{equation}\label{eheh0}
\begin{split}
& u^n-u_h^n=e^{n}_h+\hat e^n_h, \quad e_h^n:=R_hu^{n}-u_{h}^{n}, \quad \hat e_h^n:=u^n- R_hu^{n},\\ 
& \mathcal{N}_h^{n}:=2f(R_hu^{n})-2f(u^n_h),\quad   \widehat{\mathcal{N}}_h^{n}:=2f(u^n)-2f(R_hu^{n}).
\end{split}
\end{equation}
From the LogSE \eqref{lognls0B} and the scheme \eqref{lognlsfulldisweak}, we obtain
\begin{equation}\label{lognlsextdis}
\ri(D_{\tau}u^{n},v_h)-(\nabla {u}^{n+1},\nabla v_h)=(2\lambda f( u^{n})+\mathcal{T}^{n},v_h),\quad n = 0,\ldots, N_t-1.
\end{equation}
In view of \eqref{projectionu}, it  can be rewritten   as
\begin{equation}\label{lognlspronum}
\begin{split}
\ri (D_{\tau}R_hu^{n},v_h)& -(\nabla R_hu^{n+1},\nabla v_h)=(2\lambda f(u^{n})+\mathcal{T}^{n},v_h)+\ri(D_{\tau}(R_hu^{n}-u^{n}),v_h).
\end{split}
\end{equation}
Subtracting \eqref{lognlsfulldisweak} from \eqref{lognlspronum} leads to the error equation
\begin{equation}\label{erroreqnA}
\ri (D_{\tau}e^{n}_h,v_h)-(\nabla e^{n+1}_h,\nabla v_h)=\lambda ( \mathcal{N}_h^{n}, v_h)+\lambda ( \widehat{\mathcal{N}}_h^{n}, v_h)+(\mathcal{T}^{n},v_h)-\ri(D_{\tau}\hat e_h^n,v_h),
\end{equation}
for all $v_h\in V_h^0$ and $ n = 0,\ldots, N_t-1.$

We proceed with the proof by mathematical induction.
From Lemma \ref{FEMtheory},
we have
\begin{equation}\label{checke0}
\begin{split}
\|{e}^0_h\|^2&=\|R_{h}u_{0}-u^{0}_h\|^2=\| R_{h} u_{0}-{\mathcal I}_h u_0\|^2\le
  Ch^{2(r+1)}\|u_0\|_{H^{r+1}(\Omega)}^2,
\end{split}
\end{equation}
and  
\begin{equation}\label{u0hest}
\begin{split}
\|u^0_h\|_{\infty}&\le \|	R_{h}u_{0}\|_{\infty}+\|{e}^0_h\|_{\infty}\le C(\|u_{0}\|_{H^2(\Omega) }+ h^{-\frac{d}{2}}\|{e}^0_h\|)
 \\& \le C(1 +h^{r+1-{d}/{2}})\|u_0\|_{H^{r+1}(\Omega)}\le C_{u_0}.
\end{split}
\end{equation}
Thus \eqref{logbndA} holds  for $m=0.$

We assume that   \eqref{logbndA} holds for all $0\le n\le m,$ and next prove  it holds  for $n=m+1$ too.
Taking $v_h={e}^{n+1}_h$ in \eqref{erroreqnA} yields
\begin{equation}\label{classe}
\begin{split}
\frac{\ri}{\tau} \| e_{h}^{n+1} \|^2   - \frac{\ri }{\tau} (e_{h}^{n}, e_{h}^{n+1} )  -\|\nabla {e}^{n+1}_h\|^2  =\lambda(\mathcal{N}_h^{n},{e}^{n+1}_h)+\big(\lambda\widehat{\mathcal{N}}_h^{n}+\mathcal{T}^{n} -\ri D_\tau\hat e_h^n,{e}^{n+1}_h\big).
\end{split}
\end{equation}
Its imaginary part reads
\begin{equation}\label{realpartA}
\begin{split}
\frac{ \|e^{n+1}_h\|^2 -\|e^{n}_h\|^2 }{2\tau} + \frac{\| e_h ^{n+1}- e_{h}^{n} \|^2 }{2\tau} =\lambda \Im(\mathcal{N}_h^{n}, {e}^{n+1}_h)
+ \Im  \big(\lambda\widehat{\mathcal{N}}_h^{n}+\mathcal{T}^{n} -\ri D_\tau\hat e_h^n, {e}^{n+1}_h\big),
\end{split}
\end{equation}
where we used the identity
\begin{equation*}
	2\Re(e_h^{n}, e_{h}^{n+1} ) =     \| e_{h}^{n+1} \|^2 + \| e_{h}^{n} \|^2 -\| e_{h}^{n+1} - e_{h}^{n} \|^2.
\end{equation*}

We now deal with the first term on the right-hand side  of \eqref{realpartA}. Using the property \eqref{LipSC} and the imbedding inequality:
 $$ { \| u  \|_{L^{2\alpha}(\Omega) } \le |\Omega|^{\frac{1}{2\alpha}-\frac 1 2} \| u \|,} $$
{ for $\frac{1}{2} \le \alpha < 1,$} we derive  from the Cauchy-Schwarz inequality and Theorem \ref{N2L2} that for $0<\epsilon\le  \delta_\alpha$,
\begin{equation}\label{Nest}
\begin{split}
&| \lambda \Im(\mathcal{N}_h^{n}, {e}^{n+1}_h)| \le |\lambda| \cdot |\Im( \mathcal{N}_{h}^{n},e_h^{n}  ) | + |\lambda|\cdot  |\Im( \mathcal{N}_{h}^{n},e_h^{n+1} - e_h^{n}  ) |  \\&\le 2|\lambda| \cdot  \| e_h^n\|^2 +  \frac{|\lambda|^2 \tau} 2 \| \mathcal{N}_{h}^{n} \|^2 + \frac{\| e_{h}^{n+1} - e_{h}^{n} \|^2 }{2\tau}  \\
&\le   { {2} |\Omega|^{1-\alpha} }|\lambda|^2 {\mathcal H}^2_\alpha(\epsilon) \,\tau \, \|e_h^n\|^{2\alpha} + 2 |\lambda| \big( |\lambda|\Upsilon^2(\epsilon,\Lambda_\infty) \,\tau +1 \big) \, \|e_h^n\|^{2}  + \frac{\| e_{h}^{n+1} - e_{h}^{n} \|^2 }{2\tau},
\end{split}
\end{equation}
where $ {\mathcal H}_\alpha( \epsilon)$ and $ \Upsilon(\epsilon,\Lambda_\infty) $ are defined in \eqref{holderbnd} and \eqref{Hupsilon} with
\begin{equation}\label{GmmA}
  \Lambda_\infty=\max_{0\le n\le m}\big\{\|u^n_h\|_\infty, \|R_hu^n\|_\infty\big\}.
\end{equation}

We next estimate the second term on the right-hand side of \eqref{realpartA}. Similarly,   we deduce
from the Cauchy-Schwarz inequality and Theorem \ref{N2L2}  that for $ 0<\hat\epsilon\le \delta_{\alpha}$,
\begin{equation*}\label{case200}
\begin{split}
 | \lambda \Im(\widehat{\mathcal{N}}_h^{n},{e}^{n+1}_h)|& \le |\lambda| \|\widehat{\mathcal N}_h^{n}\|\| e^{n+1}_h\|\le \frac{1}{8}\|{e}^{n+1}_h\|^2+2|\lambda|^2 \|\widehat{\mathcal{N}}_h^{n}\|^2\\
    &\le {\frac{1}{8}\|{e}^{n+1}_h\|^2+ 8|\Omega|^{1-\alpha} |\lambda|^2 {\mathcal H}^2_\alpha(\hat \epsilon) \|\hat e_h^n\|^{2\alpha} + 8|\lambda|^2  \Upsilon^2(\hat\epsilon, \hat\Lambda_\infty) \, \|\hat e_h^n\|^{2}},
  \end{split}
\end{equation*}
where
\begin{equation}\label{Gmma3}
\hat \Lambda_\infty=\max_{0\le n\le N_t }\big\{ \|u^n\|_\infty, \|R_h u^n\|_\infty \big\}.
\end{equation}
Using the  FEM approximation result \eqref{uexbound}, we can further obtain  
\begin{equation}\label{case20}
\begin{split}
| \lambda \Im(\widehat{\mathcal{N}}_h^{n},{e}^{n+1}_h)| &\le \frac{1}{8}\|{e}^{n+1}_h\|^2
+{8 {C} |\lambda|^2 \big\{|\Omega|^{1-\alpha}
{\mathcal H}^2_\alpha( \hat\epsilon) }\, h^{2\alpha(r+1)} \|u\|^{2\alpha}_{L^{\infty}([t_n, t_{n+1}];H^{r+1}(\Omega))}\\
&\quad+
{\Upsilon^2(\hat \epsilon,\hat\Lambda_\infty)}\, h^{2(r+1)}
\|u\|^{2}_{L^{\infty}([t_n, t_{n+1}];H^{r+1}(\Omega))}\big\}\\
& \le \frac{1}{8}\|{e}^{n+1}_h\|^2+ {  C_u }   \big\{ |\Omega|^{1-\alpha} {\mathcal H}^2_\alpha(\hat \epsilon)h^{2\alpha(r+1)}+   \Upsilon^2(\hat\epsilon,\hat\Lambda_\infty)h^{2(r+1)} \big\},
\end{split}
\end{equation}
where the constant $C_u$ depends on the norms of $u$  but does not depend on $\tau,h,\alpha.$
On the other hand,  we derive from Lemma \ref{Tnorder} that
\begin{equation}\label{lastest2}
\begin{split}
&|\Im (\mathcal{T}^{n}, e^{n+1}_h)| \le \frac{1}{8}\|{e}^{n+1}_h\|^2 + 2 \| \mathcal{T}^{n} \|^{2}
\\ &\quad  \le \frac{1}{8}\|{e}^{n+1}_h\|^2 + { 4 } \tau^2\Big( \frac{1}{3} \| u_{tt} \|^2_{L^{\infty} ([t_n, t_{n+1}]; L^{2}(\Omega) )} + \| u_{t} \|^2_{ L^{\infty}([t_n, t_{n+1}]; H^{2} (\Omega) ) }\Big) \\
&{\quad = \frac{1}{8}\|{e}^{n+1}_h\|^2 + C_u \tau^2.}
\end{split}
\end{equation}
Similarly, we have
\begin{equation}\label{lastest}
\begin{split}
|\Re(D_\tau\hat e_h^n, {e}^{n+1}_h)|& \le  \frac{1}{8}\|{e}^{n+1}_h\|^2+ 2 \|D_\tau\hat e_h^n\|^2
\\& \le \frac{1}{8}\|{e}^{n+1}_h\|^2
  + { 2C} h^{2(r+1)}\|u_t\|^2_{L^\infty([t_n, t_{n+1}];H^{r+1}(\Omega))}\\
  &{ \le  \frac{1}{8}\|{e}^{n+1}_h\|^2+ C_u h^{2(r+1)}.}
\end{split}
\end{equation}

Combining \eqref{realpartA} with \eqref{Nest}-\eqref{lastest}, and eliminating the common term $ \| e_{h}^{n+1} - e_{h}^{n} \|^2/ (2\tau)$ on both sides, leads to
\begin{equation*}\label{lastest01}
	\begin{split}
		 \|e^{n+1}_h\|^2 - \|e^{n}_h \|^2
		& \le  \frac{3}{4} \tau   \| e_{h}^{n+1} \|^2 + 4|\Omega|^{1-\alpha} |\lambda|^2
		\mathcal{H}^2_\alpha(\epsilon) \tau^2  \| e_h^n \|^{2\alpha}
		\\&\quad + 4|\lambda| \tau  \big(|\lambda| \Upsilon^2(\epsilon,\Lambda_\infty )\tau  +1 \big) \| e_h^{n} \|^2
		\\&\quad  + { C_u}\,\tau \big(\tau^2+   h^{2(r+1)}+h^{2\alpha(r+1)}{ \mathcal H }^2_\alpha(\hat \epsilon ) +  h^{2(r+1)}  \Upsilon^2(\hat \epsilon,\hat\Lambda_\infty ) \big) .
	\end{split}
\end{equation*}
Summing up both sides of the above inequality  from $n=0$ to $m,$ gives
\begin{equation}\label{realpart2}
	\begin{split}
		\|e^{m+1}_h\|^2  &\le  \|e^{0}_h\|^2 +  \frac{3}{4} \tau \| e_{h}^{m+1} \|^2 +  4|\Omega|^{1-\alpha} |\lambda|^2
		\mathcal{H}^2_\alpha(\epsilon)  \tau^2  \sum_{n=0}^{m} \| e_h^n \|^{2\alpha}
		\\ &\quad + \tau  \Big( \frac{3}{4} + 4 |\lambda| + 4 |\lambda|^2 \Upsilon^{2}(\epsilon,\Lambda_\infty) \tau    \Big) \sum_{n=0}^{m}  \| e_{h}^{n} \|^2
		\\&\quad  + { C_u}\, T\big(\tau^2+ {{h^{2(r+1)}+h^{2\alpha(r+1)}{ \mathcal H }^2_\alpha(\hat \epsilon) +  h^{2(r+1)}  \Upsilon^2(\hat \epsilon,\hat\Lambda_\infty) \big)}}.
	\end{split}
\end{equation}
 Then  we obtain from \eqref{checke0} and \eqref{realpart2} that for   $0 <\tau < 4/3,$
\begin{equation}\label{Upsilonem100}
\|e^{m+1}_h\|^2\le  c_1 + c_2 \sum_{n=0}^{m}\|e^n_h\|^{2\alpha}
+  c_3  \sum_{n=0}^{m}  \|e^{n}_h\|^2,
\end{equation}
{ where $\alpha\in [1/2,1)$} and
\begin{equation}\label{c123}
\begin{split}
c_1&=  { C_u \, C_\tau}\, T\big(\tau^2+ {{h^{2(r+1)}+h^{2\alpha(r+1)}{ \mathcal H }^2_\alpha(\hat \epsilon) +  h^{2(r+1)} \Upsilon^2(\hat \epsilon,\hat\Lambda_\infty) \big)}}, \\ 
c_2&= 4 C_\tau\, |\Omega|^{1-\alpha} |\lambda|^2   \tau^2 {\mathcal H}^2_\alpha(\epsilon),\quad c_3=\tau C_\tau\, \Big( \frac 3 4+ 4 |\lambda| +4 |\lambda|^2 \Upsilon^{2}(\epsilon,\Lambda_\infty) \tau \Big),
\end{split}
\end{equation}
with $C_\tau=1/(1-3\tau/4).$
Using the nonlinear Gr\"{o}nwall's inequality in Lemma \ref{disgronwalls} leads to
\begin{equation}\label{disgronwalleqs200}
\begin{split}
\|e^{m+1}_h\|^2
& \leq c_1 \big(1-\alpha^{-1}+\alpha^{-1}{(1+\alpha c_1^{\alpha-1}c_2+c_3)^m}\big)\\
&\le c_1\big(1-\alpha^{-1} +\alpha^{-1} {\rm exp}\big({m (\alpha c_1^{\alpha-1}c_2+c_3)}\big) \big),
\end{split}
\end{equation}
where we used the inequality: $(1+z)^m\leq e^{mz}$ for $z\ge 0.$
Note that for $0<\epsilon,\hat \epsilon\le \delta_\alpha,$  the factor in the exponential can be bounded by
\begin{equation}\label{Bronwalleqs00200}
\begin{split}
&G_m:={m \big(\alpha { c_2 }/{ c_1^{ 1 - \alpha } }+c_3\big)}= m\tau  C_\tau \big( 3/4+ 4 |\lambda| +4 |\lambda|^2 \Upsilon^{2}(\epsilon,\Lambda_\infty) \tau \big)\\[6pt] &\quad +  \frac{ 4 m \alpha C_\tau |\Omega|^{1-\alpha} |\lambda|^2   \tau^{2\alpha} {\mathcal H}^2_\alpha(\epsilon)}{({ C_u C_\tau}\,T)^{1-\alpha} \big(1+ \tau^{-2} h^{2(r+1)}+\tau^{-2}h^{2\alpha(r+1)}{ \mathcal H}^2_\alpha
(\hat\epsilon)+ \tau^{-2}h^{2(r+1)}  \Upsilon^2(\hat \epsilon, \hat\Lambda_\infty) \big)^{1-\alpha} }
\\[6pt]
&\le  C_\tau T \big( 3/4+ 4 |\lambda| +4 |\lambda|^2 \Upsilon^{2}(\epsilon,\Lambda_\infty) \tau \big) +
4 \alpha { (C_u C_\tau)^{\alpha-1}} T^\alpha  |\Omega|^{1-\alpha}  |\lambda|^2   \tau^{2\alpha-1} {\mathcal H}^2_\alpha(\epsilon),
\end{split}
\end{equation}
where  we used the inequality: $(1+z)^{\beta}\le 1$ for $z\ge 0$ and $\beta=\alpha-1<0.$

We now choose suitable  values of $\epsilon$ and $\hat \epsilon$ in the bound of $G_m$ and $c_1,$  respectively,  so as to obtain the best possible order of convergence.
Taking $\epsilon=\tau$ and using the induction assumption, we find from the definition \eqref{Hupsilon} and \eqref{GmmA}  that 
 for $0<\tau  \le \delta_\alpha,$
 \begin{equation*}\label{UptauA00}
 \begin{split}
& \tau^{2\alpha-1} {\mathcal H}^2_\alpha(\tau)\le 4^{1-\alpha} \tau (|\!\ln \tau|+1)^2,
 \\[3pt]
&  \tau \Upsilon^2(\tau, \Lambda_\infty)\le 2\tau \big( | \! \ln  \tau|^2+ (|\!\ln  \Lambda_\infty | +1)^2 \big) \le 2\tau \big( | \! \ln  \tau|^2+ C_u \big)
\end{split}
 \end{equation*}
 where we used $\| R_h u^n \|_\infty \le C h^{1/2}\| u^n \|_{H^2 ( \Omega ) } $ from \eqref{Linftybound}.
 Note that for $0<\tau  \le \delta_\alpha<1,$
   $$1\le C_\tau\le C_{\delta_\alpha}\le \frac 1{1-3/(4e)}< \frac {7} 5.$$
Thus we derive from \eqref{Bronwalleqs00200}  and the above  that
 \begin{equation}\label{GmC}
 \begin{split}
  G_m  \le \frac {7} 5 T\Big(\frac 3 4+ 4 |\lambda| \Big) + CT \big(1+(T C_u)^{\alpha-1}\big) \tau |\ln\tau|^2
  +C_u\tau.
  \end{split}
 \end{equation}
 Recall that $C_u$ is in terms of the norms of $u$ with regularity given in \eqref{ucnAB}, which is nonzero.

 We now deal with $c_1$ in \eqref{c123}. Taking ${\hat \epsilon=h^{r+1} }$, we find from \eqref{Hupsilon} and  \eqref{Gmma3}
 that
 \begin{equation}\label{est:c1}
 \begin{split}
  c_1 & \le  C_u C_\tau T\big\{ \tau^2 + h^{2(r+1)} +  4^{1-\alpha} h^{2(r+1)} ((r+1)|\! \ln h | +1 )^2  \\
& \quad +  2h^{2(r+1)} \big((r+1)^2|\!\ln h|^2 +  (|\!\ln \hat \Lambda_\infty | +1)^2 \big)\big\} \\
& \le C_u T \big\{ \tau^2 + 2(r+1)^2(4^{1-\alpha} +1) h^{2(r+1)} |\!\ln h|^2  \\
&\quad + \big(1 + 2^{3-2\alpha} + 2(|\!\ln \hat \Lambda_\infty|  + 1 )^2 \big)h^{2(r+1)} \big\} \\
& \le C_u T\big\{ \tau^2 +  h^{2(r+1)} |\! \ln h|^2 \},
\end{split}
 \end{equation}
where we used Lemma \ref{FEMtheory2} to bound
$|\!\ln \hat \Lambda_\infty|. $

We deduce from \eqref{c123}-\eqref{est:c1} that if {  $\frac 1 2\le \alpha<1,\, 0<\tau \le  \delta_\alpha$ and $h^{r+1} \le  \delta_\alpha,$ then}
 \begin{equation}\label{Gmbnd2}
 \begin{split}
 \|e^{m+1}_h\|&\le \sqrt{c_1(1 +\alpha^{-1}({\rm exp}(G_m)-1))}\\
 &\le  C_u  \, {\rm exp}\big(CT\{1+C_{u}^{\alpha-1}\tau | \! \ln \tau|^2\}\big) (\tau + { h^{r+1} } |\!\ln h|),
\end{split}
\end{equation}
where  the constants $C, C_u$ are essentially independent of $\alpha.$  Note that $\delta_\alpha={\rm exp}(\alpha/(\alpha-1))$ is decreasing in $\alpha$ and $\delta_\alpha\to 0$ as $\alpha\to 1^{-}.$ Thus we choose $\alpha=1/2$ so that $\delta_\alpha$ has the maximum value $e^{-1}.$ Moreover, as the function $\tau|\! \ln \tau |^2 $  is monotonically  increasing on $[0, \delta_\alpha] $, we have
$$
0\le \tau|\! \ln \tau |^2\le \delta_\alpha |\! \ln \delta_\alpha |^2\le \delta_{1/2} |\! \ln \delta_{1/2}|^2=e^{-1}.
 $$
In view of these, we infer from \eqref{Gmbnd2} that   if $0<\tau, h^{r+1} \le  e^{-1},$ then
 \begin{equation}\label{Gmbnd20}
\|e^{m+1}_h\| \le  C_u  (\tau + {h^{r+1} } |\!\ln h|).
\end{equation}
%
%
%
%
Finally, we obtain from \eqref{Linftybound}, \eqref{inverseineq} and \eqref{Gmbnd20} that if $\tau \le Ch^{d/2},$ then
\begin{equation}\label{umbnde00}
\begin{split}
\|u^{m+1}_h\|_\infty  & \le  \| R_h u^{m+1} \|_{ {\infty} } + \|e^{m+1}_h\|_{\infty}
 \le   C\big(\| u^{m+1} \|_{ H^2(\Omega) } +  h^{-d/2}\|e^{m+1}_h\|\big) \\
& \le  C_u  (1+\tau h^{-d/2} +  h^{r+1-d/2}|\! \ln h| ) \le C_u.
\end{split}
\end{equation}
The completes the mathematical induction.
\end{proof}

\begin{remark}\label{remarkadded} {\em It is noteworthy that the proof and results of Theorem   {\rm \ref{enzhao0}}  cover the special cases:  {\rm (i)}  $\Upsilon(\cdot,\cdot)=0$ and {\rm (ii)} $ {\mathcal H}_\alpha(\cdot)=0,$ which correspond to    Case-{\rm(ii)} in Theorem {\rm \ref{N2L2}}  and Remark {\rm \ref{regularCase}}, respectively.
\qed}
\end{remark}

We are now ready to present  the main result on the convergence of the numerical scheme.
\begin{thm}\label{mainresultA} Assume the conditions in Theorem  {\rm \ref{enzhao0}} hold.   Then we have the error estimate
\begin{equation}\label{L2est}
\|u^n-u_h^n\|\le C_u (\tau + { h^{r+1} } |\!\ln h|),\quad n=0,\cdots, N_t,
\end{equation}
 where $C_u$ is a positive  constant of the same type as in  Theorem  {\rm \ref{enzhao0}}.
\end{thm}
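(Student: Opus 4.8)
The plan is to exploit the fact that the heavy lifting has already been completed inside the proof of Theorem~\ref{enzhao0}. Recall the standard splitting from \eqref{eheh0}, namely $u^n-u_h^n=e_h^n+\hat e_h^n$ with $e_h^n=R_hu^n-u_h^n$ and $\hat e_h^n=u^n-R_hu^n$, together with the triangle inequality
\[
\|u^n-u_h^n\|\le \|e_h^n\|+\|\hat e_h^n\|.
\]
I would bound the two pieces separately, treating the projection error and the consistency error in turn.

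For the projection part $\hat e_h^n$, I would invoke the Ritz projection estimate \eqref{uexbound} with $s=r+1$, which yields $\|\hat e_h^n\|\le Ch^{r+1}\|u^n\|_{H^{r+1}(\Omega)}\le C_u\,h^{r+1}$ uniformly in $n$, using the regularity \eqref{ucnAB} of the exact solution. For the consistency part $e_h^n$, the key observation is that the bound \eqref{Gmbnd20} derived during the induction in Theorem~\ref{enzhao0}, namely $\|e_h^{m+1}\|\le C_u(\tau+h^{r+1}|\!\ln h|)$, is not merely an auxiliary step: once the uniform $L^\infty$-bound \eqref{logbndA} has been established for all $m=0,\dots,N_t$, that same estimate holds for every index. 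Thus under the stated hypotheses ($0<\tau,h^{r+1}\le e^{-1}$ and $\tau\le Ch^{d/2}$) I directly obtain $\|e_h^n\|\le C_u(\tau+h^{r+1}|\!\ln h|)$ for all $n$.

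Combining the two bounds and noting that $h^{r+1}\le h^{r+1}|\!\ln h|$ whenever $0<h<e^{-1}$ (so the pure $h^{r+1}$ term coming from the projection is absorbed into the logarithmically weighted one), I arrive at \eqref{L2est}. I expect no genuine obstacle in this final step: all the difficulty---the nonlinear Gr\"onwall argument of Lemma~\ref{disgronwalls}, and the careful tracking of the H\"older constant $\mathcal H_\alpha$ together with the Lipschitz factor $\Upsilon$ through the choices $\epsilon=\tau$ and $\hat\epsilon=h^{r+1}$---was already surmounted in Theorem~\ref{enzhao0}. What remains is simply to assemble these pieces via the triangle inequality, so the main convergence estimate is essentially a corollary of the uniform boundedness result.
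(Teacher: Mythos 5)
Your proposal is correct and takes essentially the same route as the paper: the same splitting $u^n-u_h^n=e_h^n+\hat e_h^n$ from \eqref{eheh0}, the bound \eqref{Gmbnd20} carried over from the induction in Theorem \ref{enzhao0}, and the Ritz projection estimate from Lemma \ref{FEMtheory}, assembled by the triangle inequality. The only (harmless) difference is that you explicitly absorb the pure $h^{r+1}$ projection term into $h^{r+1}|\!\ln h|$ using $|\!\ln h|\ge 1$ for $h\le e^{-1}$, a step the paper leaves implicit in its final line.
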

\begin{proof}  In view of \eqref{eheh0},
using  the estimate \eqref{Gmbnd20} in the proof of Theorem \ref{enzhao0} and the approximation results in Lemma \ref{FEMtheory}, we obtain immediately that
\begin{equation*}\label{step11}
\begin{split}
\|u^n-u_h^n\|\le \|e_h^n\|+\| R_h u^n-u^n\|\le   C_u(\tau + { h^{r+1}  }|\!\ln h| +h^{r+1}),
\end{split}
\end{equation*}
which completes the proof.
%
%
\end{proof}

Some remarks on Theorems \ref{enzhao0}-\ref{mainresultA} are  in order.
\begin{itemize}
\item[(i)] Compared with the usual Schr\"{o}dinger equation with a smooth nonlinear term, the non-differentiable logarithmic nonlinearity   brings about the extra logarithmic factor in \eqref{L2est} and a minor reduction of the order (i.e., ${ h^{ r+1} |\!\ln h| }$ versus $h^{r+1}$).  This seems unavoidable though such a factor is not  significant.
\smallskip
\item[(ii)]  The  condition  $\tau=O(h^{d/2})$ is resulted from the use of inverse inequality  \eqref{inverseineq} in Lemma \ref{FEMtheory}. For the Schr\"odinger equation with cubic nonlinearity,  such a constraint can be removed by using the argument of Li and Sun \cite{Li2013Error,Li2013Unconditional}. We refer to   Wang \cite{Wang2014A} where the nonlinear term $f(u)$ was assumed to be in $C^2(\mathbb R)$.  However, the argument therein  cannot be extended to this setting due to the non-differentiability of $f(u).$
\smallskip
\item[(iii)] The logarithmic nonlinearity also induces  a minor reduction of  convergence order 
with  the regularity assumption \eqref{ucnAB}.  Although  the theory on the regularity of  the solution to the LogSE is still under-explored,  the important family of soliton-like  solutions called Gaussons  (see  \cite{Bialynicki1979Gaussons,Carles2018Universal}) satisfy  \eqref{ucnAB}.
\smallskip
 \item[(iv)] In the error bound of the regularised method in Bao et al  \cite[Theorem 3.1]{Bao2019Error},
the regularity assumption was imposed on the solution of the regularised LogSE \eqref{Rlognls0C}, which is in contrast to  our results in the above.
\end{itemize}

\section{Numerical results}
In this section, we  provide ample numerical results to demonstrate the  convergence of the scheme analysed in   the previous section.

We first consider the LogSE \eqref{lognls0B} with the exact Gaussian solution (see, e.g.,  \cite[(1.7)]{Bao2019Error}):
\begin{equation}\label{2dGex}
u(x,t)=b\exp\big(\ri\big(x\cdot\zeta  -(a+|\zeta|^2)t\big)+(\lambda/2)|x-2\zeta  t|^2 \big), \quad x \in \mathbb{R}^d, \;\; t \geq 0,
\end{equation}
where $a=-\lambda (d-\ln|b|^2)$, and  $ 0\not=b \in \mathbb{R},$ $\zeta\in \mathbb{R}^d$ are free to choose.
Accordingly, we take the initial value and boundary conditions {on $\partial \Omega$} from the exact solution.
 In the  test, we set $\Omega=(-1,1)^d, \lambda=-1$ and choose $\zeta=0$, $ b=1$ in \eqref{2dGex} for $0\le t\le T=1$.
  {To deal with the inhomogeneous Dirichlet boundary data, we simply subtract a polynomial from the solution  as usual (see, e.g.,  \cite[(3.25)]{LiWangLi23}).}  To demonstrate the convergence rate in time, we fix $h=2^{-5}$ and take the time step as $\tau_j=0.1\times2^{-j},\,j=1,\ldots,5$ for the linear and quadratic FEM in 1D  (see Figure \ref{logsetimeorderFEM1D}). To examine the convergence order  in space, we  fix $\tau=10^{-5}$ and vary the
spatial mesh size $h_j=2^{-j},\,j=1,\ldots,5$ for the linear FEM and $h_j=\frac{1}{j+1},\,j=1,\ldots,5$ for the quadratic FEM
(see Figure \ref{logsespaceorderFEM1D}).

In {Figures \ref{logsetimeorderFEM1D} and \ref{logsespaceorderFEM1D}},  we plot  in log-log scale  the discrete $l^2$-error (denoted by $e_2$) and $l^\infty$-error (denoted by $e_\infty$)  of the scheme \eqref{lognlsfulldisweak} in 1D for the piecewise linear and quadratic FEM, respectively. For such a regular solution, we infer from Theorem \ref{mainresultA} (with $\alpha$ being sufficiently close to $1$) that  the expected convergence is approximately ${ O(\tau+h^\beta)}$ with $\beta=2,3$ in $e_2.$ As expected, the error $e_\infty$ is as good as the approximation of \eqref{2dGex} by FEM {in space} and FD in time with the above choice of $\tau$ { and $h$}.
\begin{figure}[!h]
  \centering
   \subfigure[Piecewise linear FEM with $h=2^{-5}$]{\includegraphics[height=1.7in,width=0.45\textwidth]{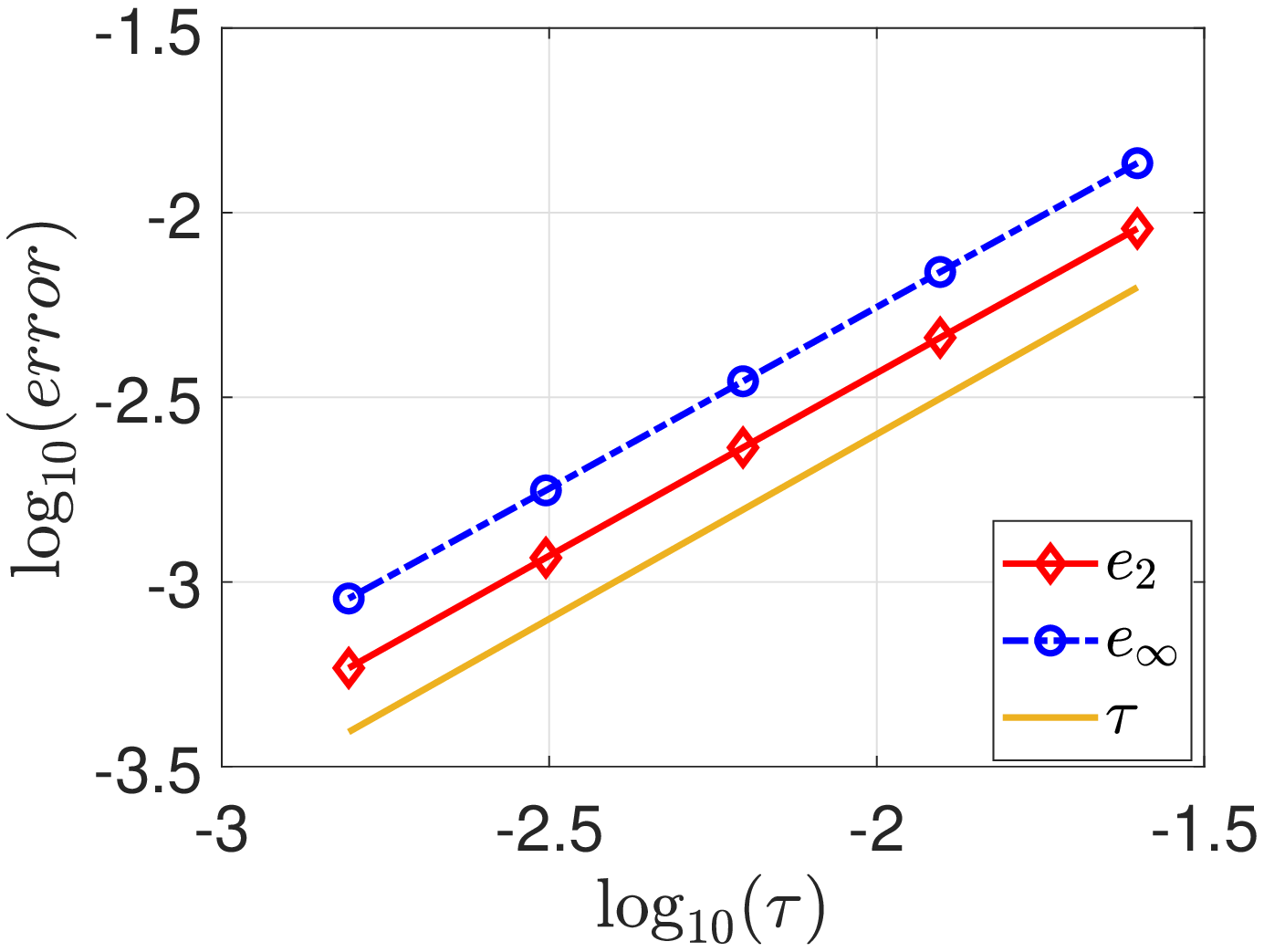}}
   \quad \subfigure[Piecewise quadratic FEM with $h=2^{-5}$]{\includegraphics[height=1.7in,width=0.45\textwidth]{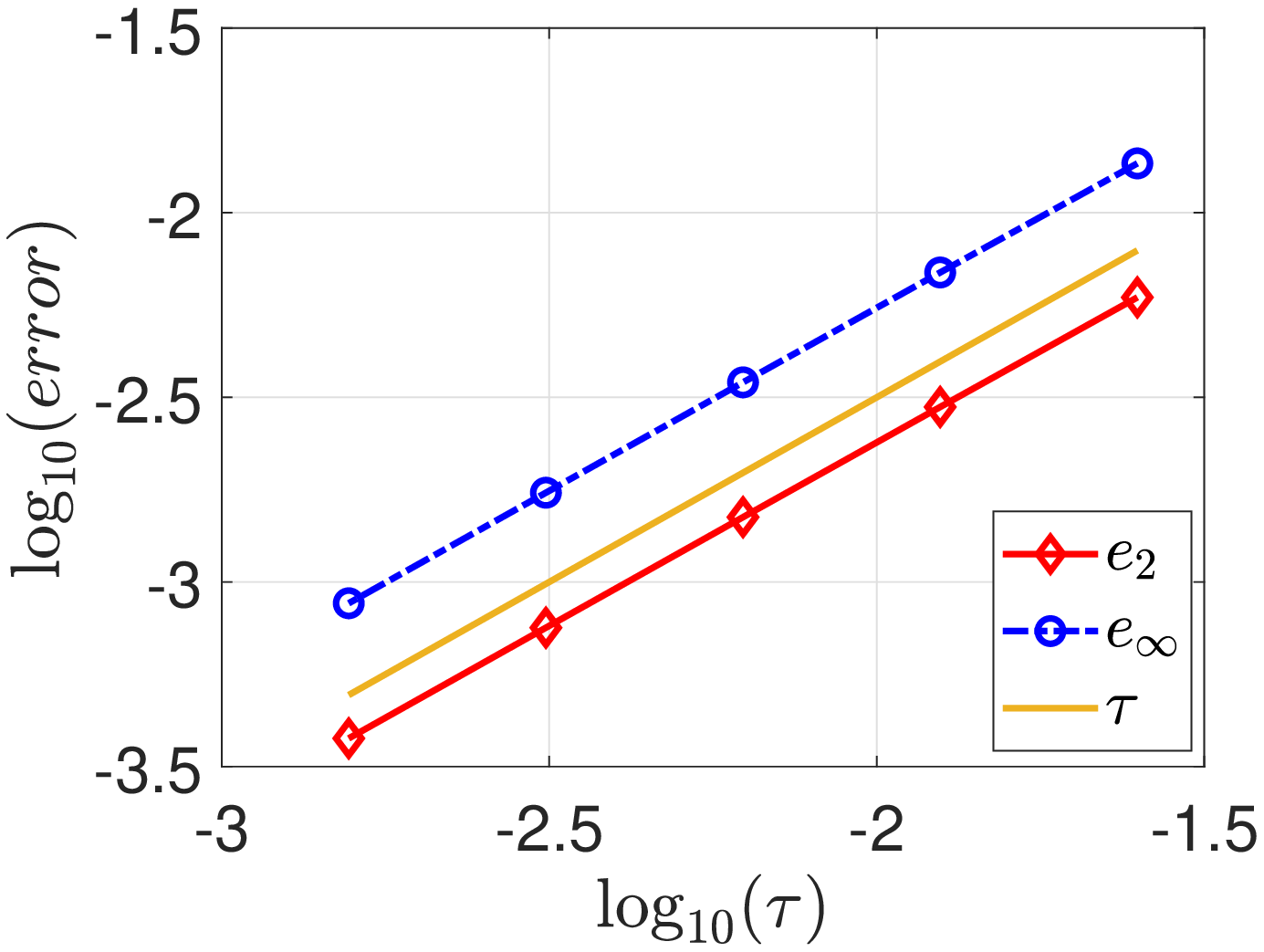}}
   \vspace{-0.05in}
    \caption{\small Convergence of the {first-order IMEX-FEM} scheme \eqref{lognlsfulldisweak} {in time} for the LogSE in 1D. }\label{logsetimeorderFEM1D}
\end{figure}
\begin{figure}[!h]
  \centering
   \subfigure[Piecewise linear FEM with $\tau=10^{-5}$]{\includegraphics[height=1.7in,width=0.45\textwidth]{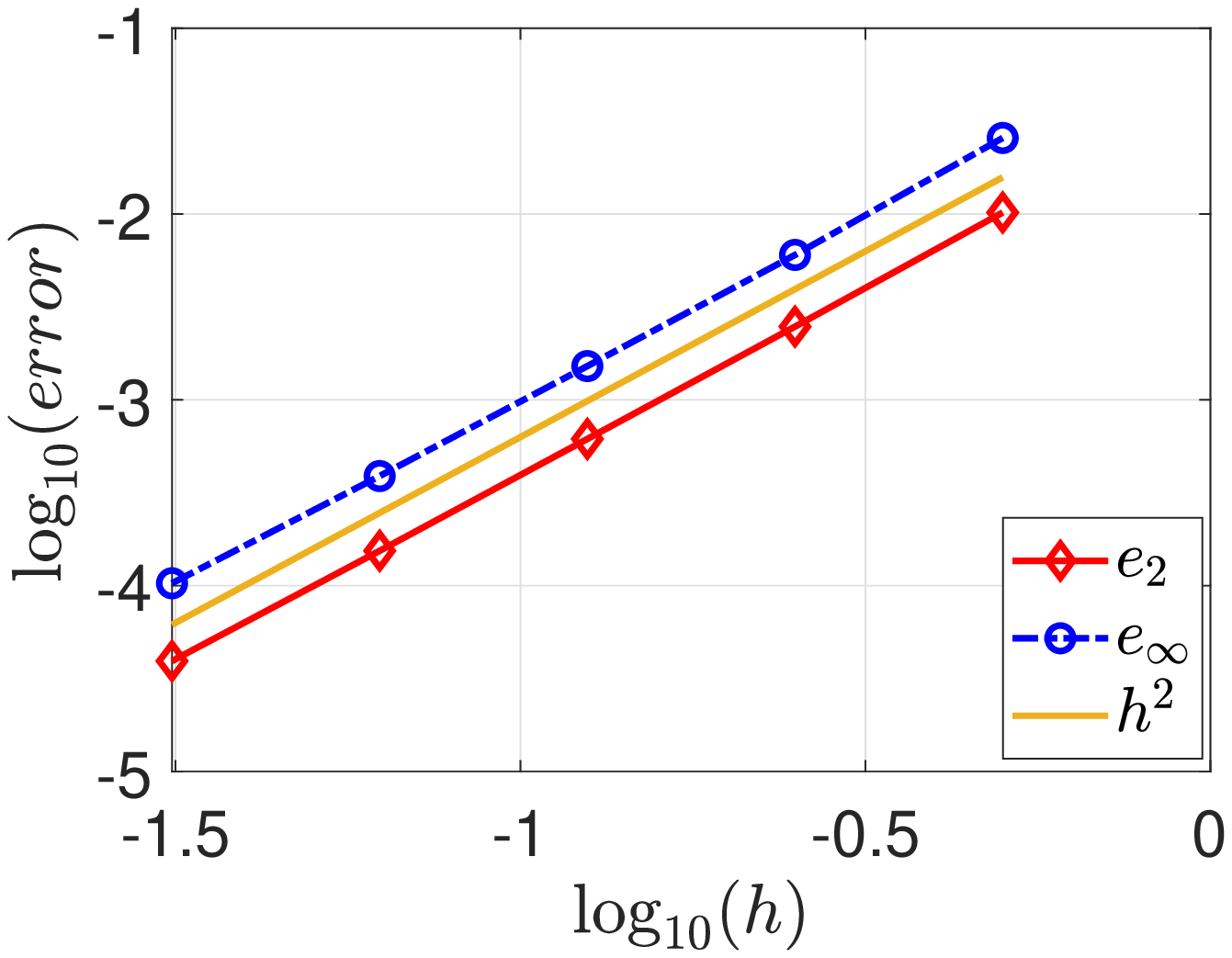}}
   \subfigure[Piecewise quadratic FEM with $\tau=10^{-5}$]{\includegraphics[height=1.7in,width=0.45\textwidth]{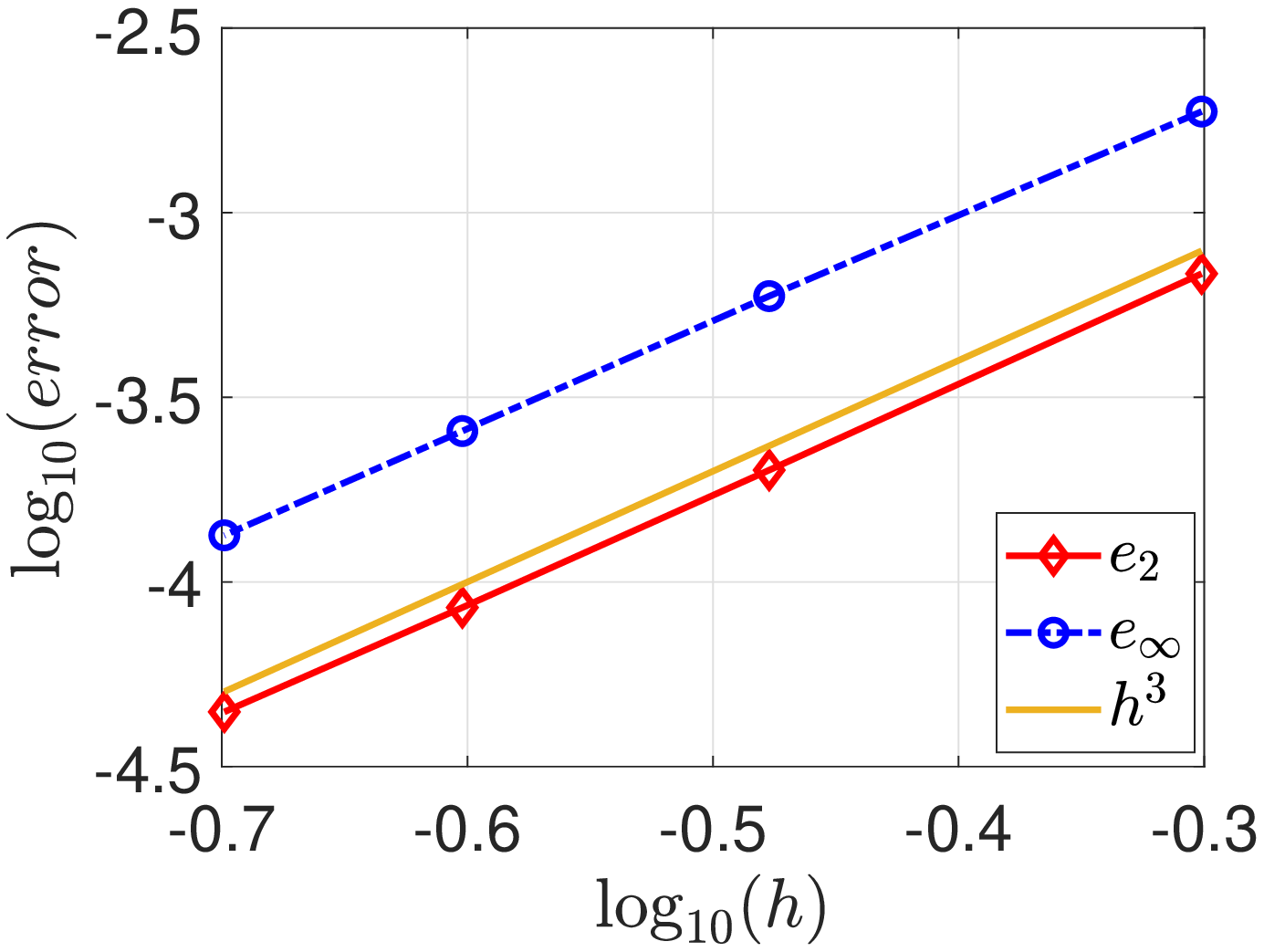}}
   \vspace{-0.05in}
    \caption{\small Convergence of the {first-order IMEX-FEM} scheme \eqref{lognlsfulldisweak} {in space} for the LogSE in 1D. }\label{logsespaceorderFEM1D}
\end{figure}


{In Figure \ref{logseorderFEM2D}, we depict  the errors and convergence orders of the proposed scheme for the LogSE in 2D. Here we take  $d=2$  in \eqref{2dGex} with the other parameters being the same as the one-dimensional tests.  To demonstrate the convergence rate in time, we fix $h=2^{-5}$ and take the time step as $\tau_j=0.01\times2^{-j},\,j=1,\ldots,5$ for the piecewise linear FEM (see Figure \ref{logseorderFEM2D} left). To examine the convergence order in space-time, we take $\tau_j=h^{2}_j$ with $h_j=\frac{1}{20+4j},\,j=1,\ldots,4$.
The numerical results in Figure \ref{logseorderFEM2D} demonstrate the expected convergence orders.}
\begin{figure}[!h]
  \centering
   \subfigure[{ Two-dimensional test with               $h=2^{-5}$}]{\includegraphics[height=1.7in,width=0.45\textwidth]{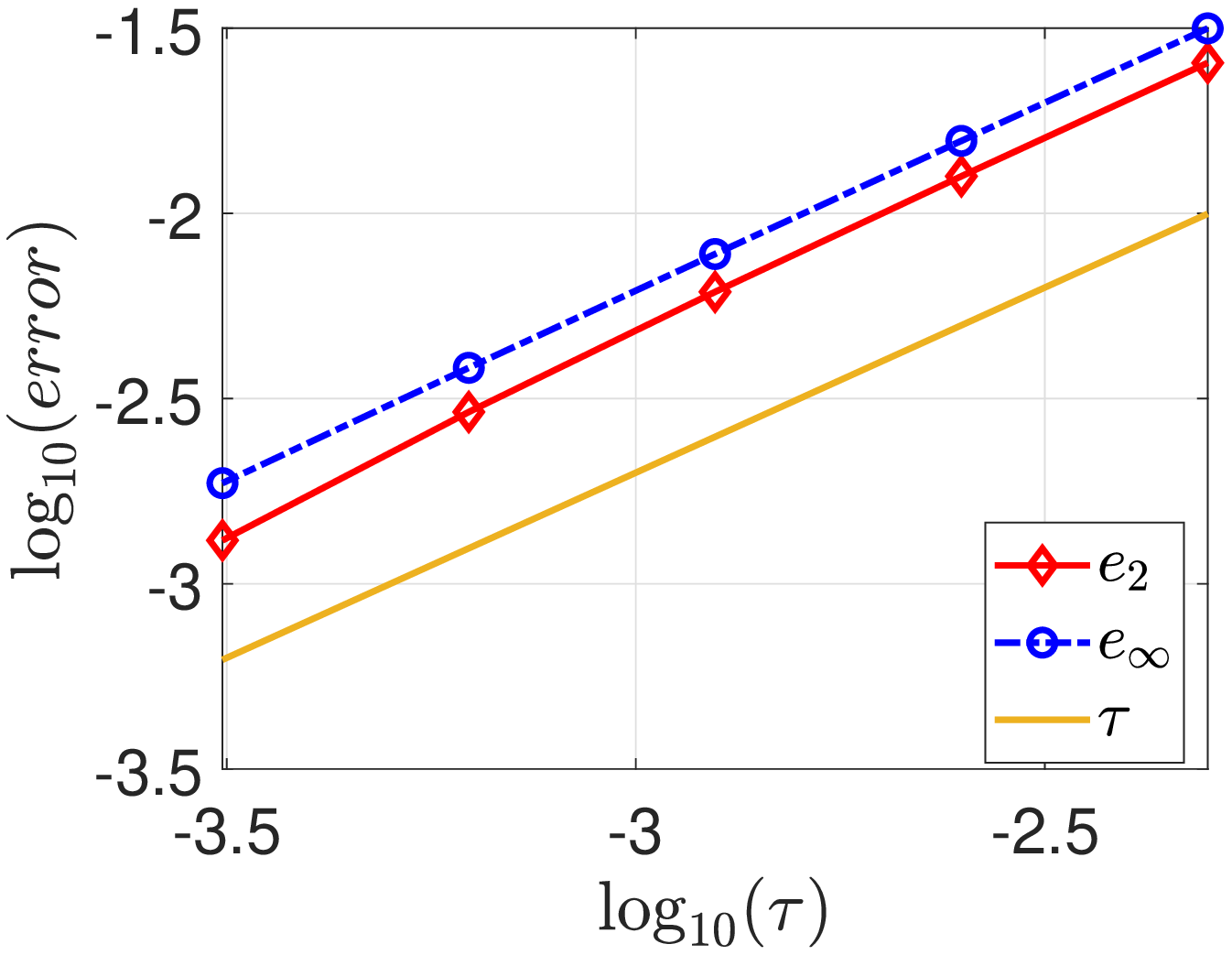}}
   \subfigure[{Two-dimensional test with $\tau=h^{2}$}]{\includegraphics[height=1.7in,width=0.45\textwidth]{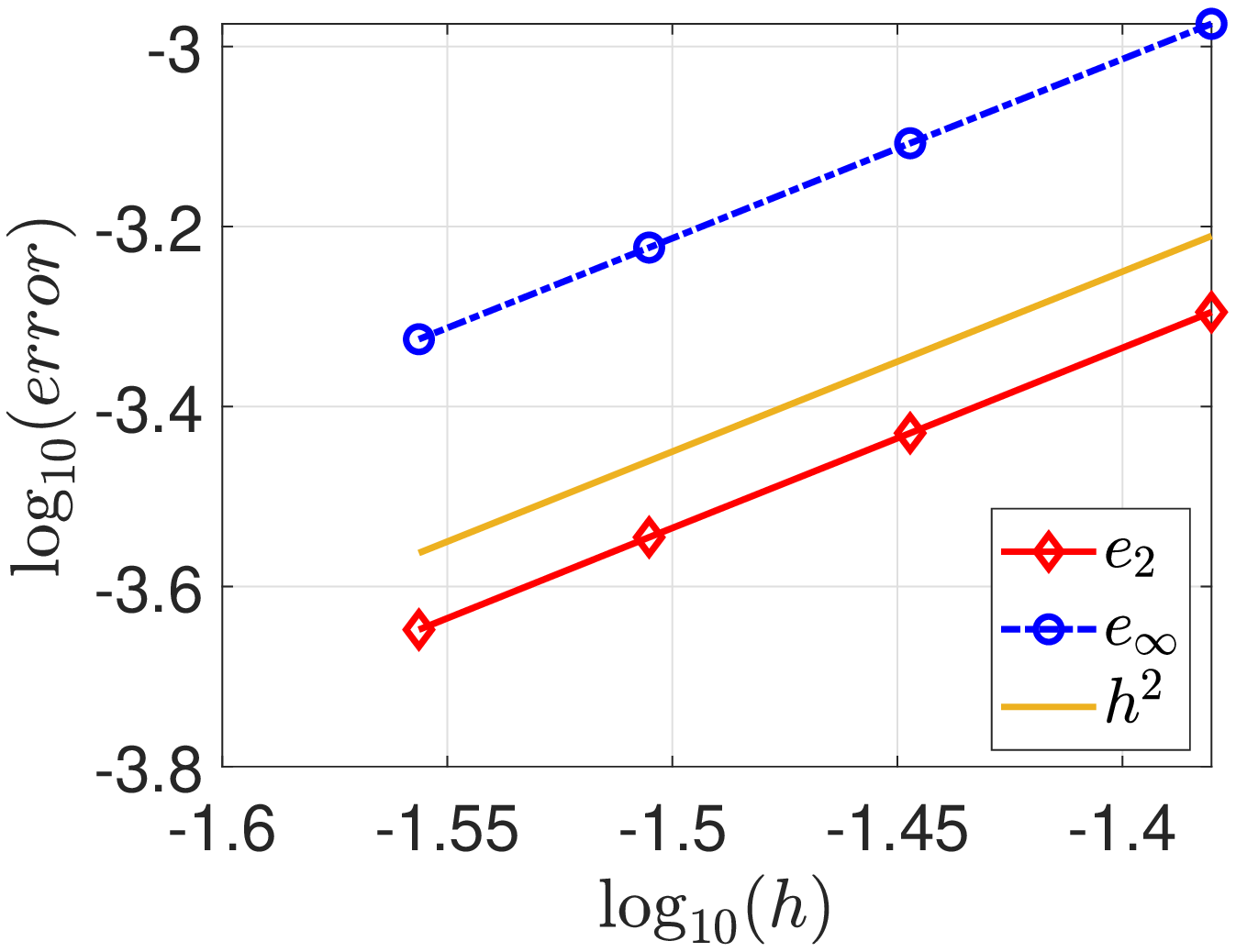}}
   \vspace{-0.05in}
    \caption{\small Convergence of the {first-order IMEX} piecewise linear FEM scheme \eqref{lognlsfulldisweak} {in time  with fixed $h$ (left) and space-time with $\tau=h^2$ (right)} for the LogSE in 2D. }\label{logseorderFEM2D}
\end{figure}

\begin{remark}\label{e2einfty}
{\em Observe from the above numerical results that $e_2$ and $e_\infty$ have a similar convergence order. In fact, for such a  smooth solution, its FEM approximation  in $L^2$-norm and $L^\infty$-norm differ from a log-factor. For example,   we find form \cite[Theorem 1.4]{Thomee2006Galerkin} that in the piecewise linear case, we have
$$\|R_{h}u-u \|_\infty+\|  \mathcal{I}_{h} u -u\|_{ \infty} \le C \ell_h h^2 \| u \|_{W^{2,\infty}(\Omega)}.$$
Although we could not rigorously derive the $L^\infty$-estimate for the scheme, we observe similar convergence behaviours. \qed
 }
\end{remark}


In the second test, we study the dynamics of the LogSE with $d=1$ by choosing the initial data as a combination of two Gaussian functions
as in \cite[Example 3]{Bao2019Regularized}:
\begin{equation}\label{Gaini}
u_{0}(x)=\sum_{k=1}^{2} {\rm exp}\Big({-\frac{a_{k}}{2}\left(x-x_{k}\right)^{2}+\ri \zeta_{k} x}\Big), \quad x \in \mathbb{R}.
\end{equation}
  with the velocity $\zeta_k$ and the initial location $x_k$. Here, we can consider the following cases:
 \begin{enumerate}\label{num:cases}
   \item[(i)] {$a_1=a_2=1, \zeta_1=\zeta_2=0, x_1=-5, x_2=5;$}
   \item[(ii)] {$a_1=a_2=1, \zeta_1=\zeta_2=0, x_1=-2, x_2=2;$}
   \item[(iii)] {$a_1=a_2=1, \zeta_1=2, \zeta_2=-2, x_1=-30, x_2=30.$}
 \end{enumerate}
We solve this problem on $\Omega=(-40,40)$ with the time step size ${10^{-4}}$, mesh size $h=0.05$  and $\lambda=-1$ and with zero boundary conditions. In Figure \ref{logseu1DFEM}, we depict the time evolution of the numerical solutions and  ``mass''. {For static Gaussons (i.e., $\zeta_k = 0$ in \eqref{Gaini}), if they were initially well-separated, the
two Gaussons will stay stable as separated static Gaussons (see Figure \ref{logseu1DFEM} (top row)) with
density profile unchanged. However, when they get closer, the two Gaussons contact and undergo
attractive interactions. They move to each other, collide and stick together later. Shortly,
the two Gaussons separate and swing like pendulum (see Figure \ref{logseu1DFEM}  (second row)). For moving Gaussons, the two Gaussons basically move at constant velocities and preserve their profiles in density exactly, during the interaction, there occurs oscillation, and after collision, the velocities of Gaussons change. The two Gaussons oscillate like breathers and separate completely at last (see Figure \ref{logseu1DFEM} (last row)).}
\begin{figure}[!ht]
  \centering
  \subfigure{\includegraphics[width=0.30\textwidth]{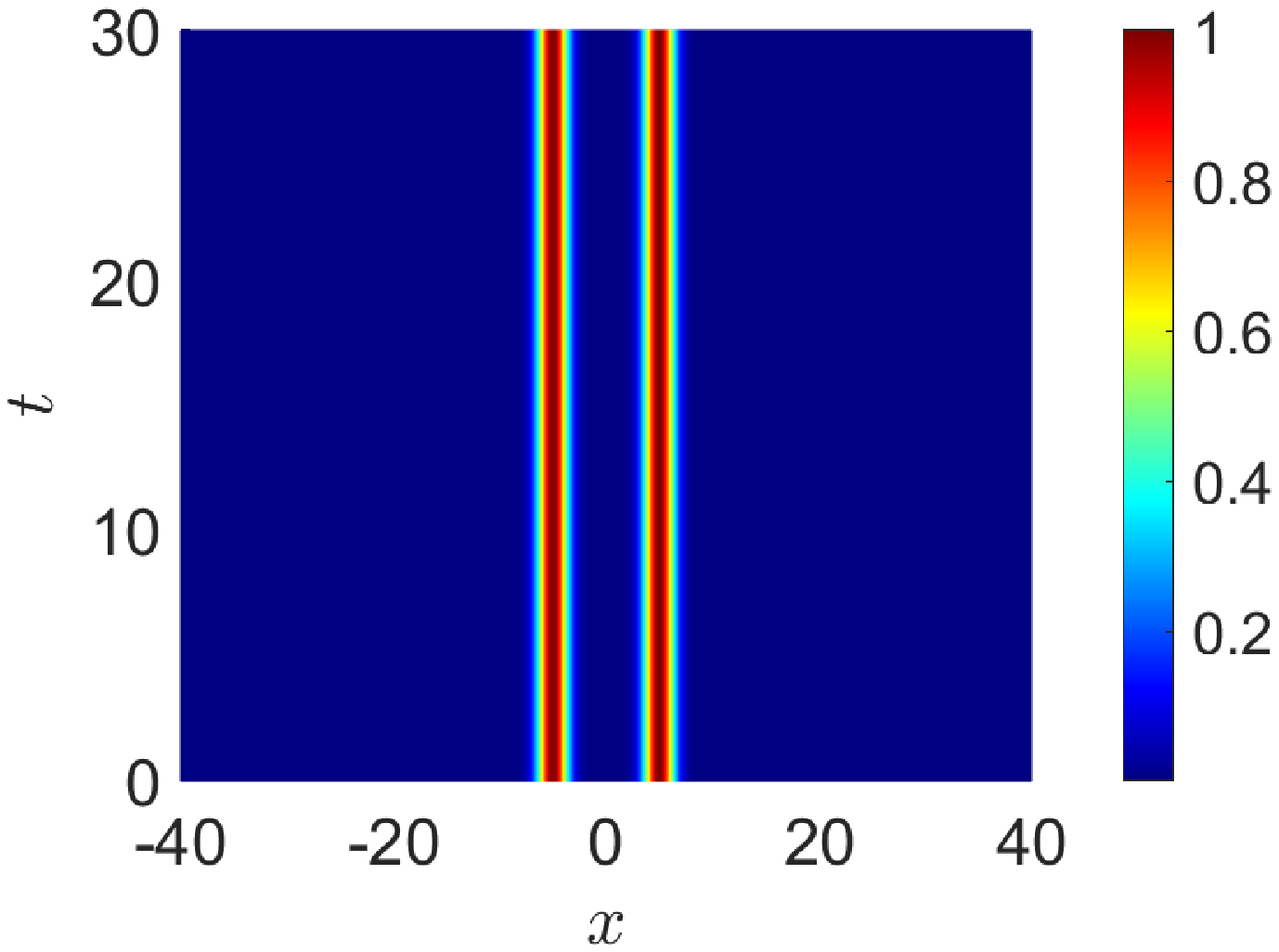}}
   \subfigure{\includegraphics[width=0.30\textwidth]{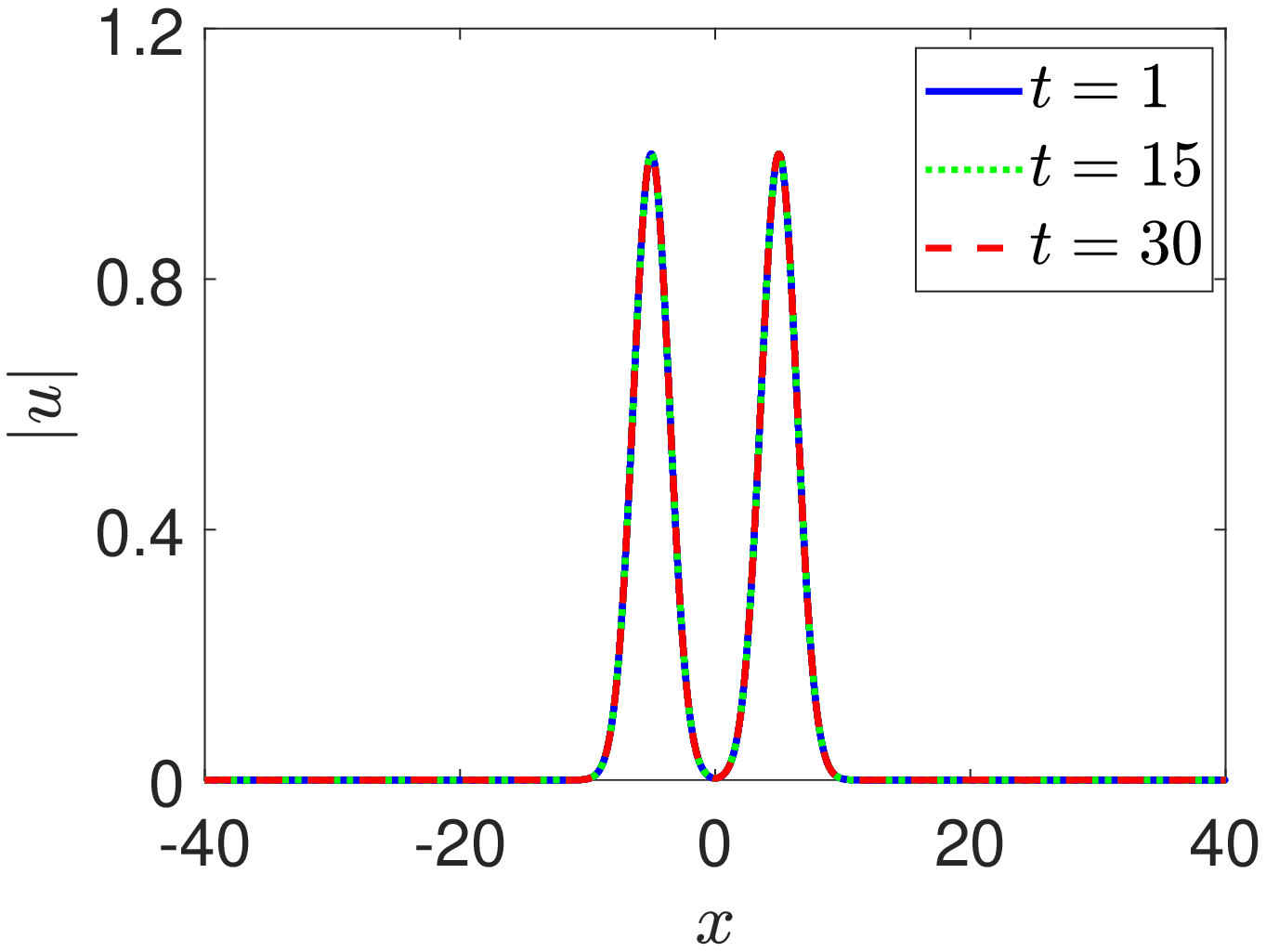}}
   \subfigure{\includegraphics[width=0.30\textwidth]{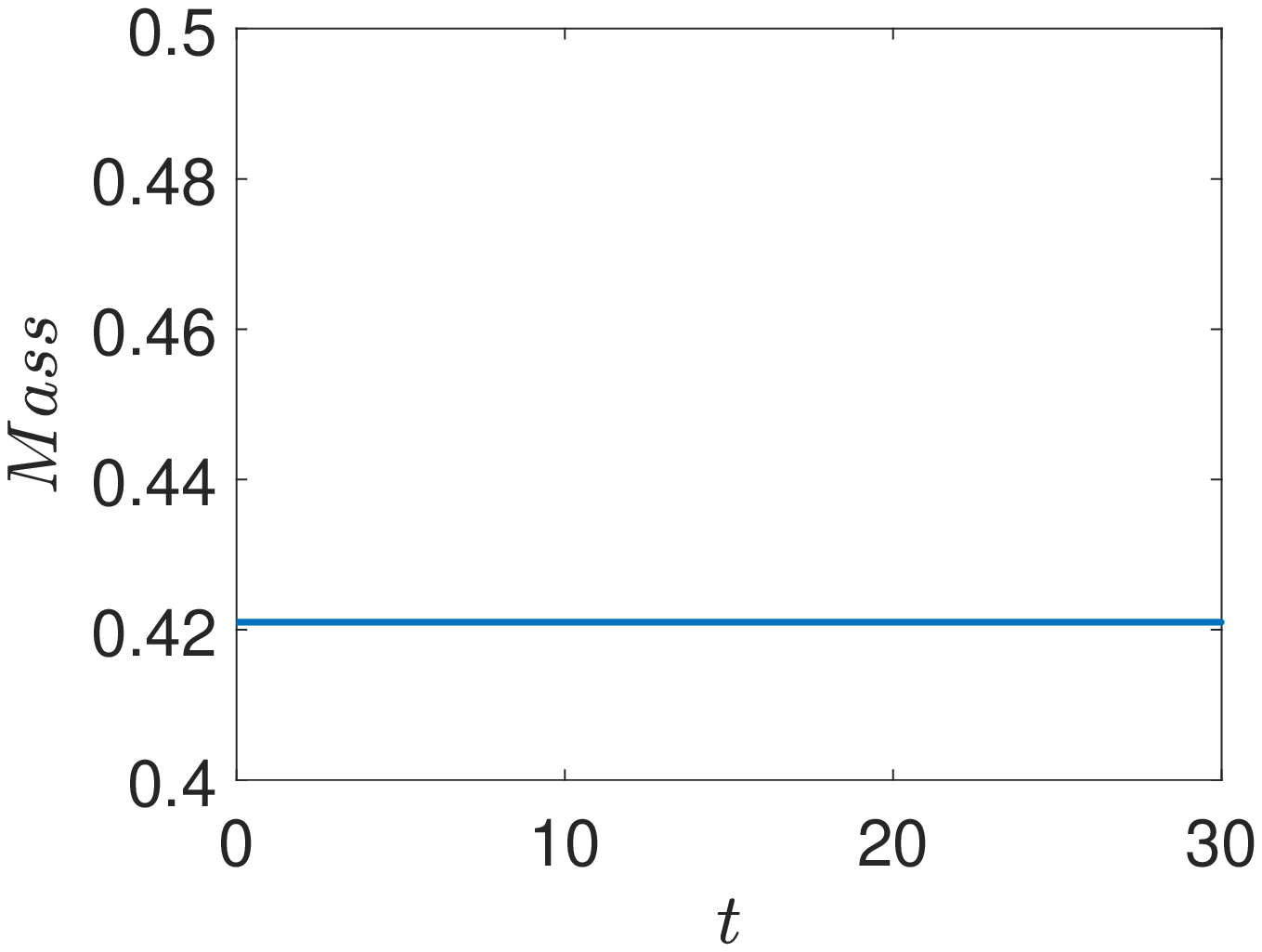}}
   \subfigure{\includegraphics[width=0.30\textwidth]{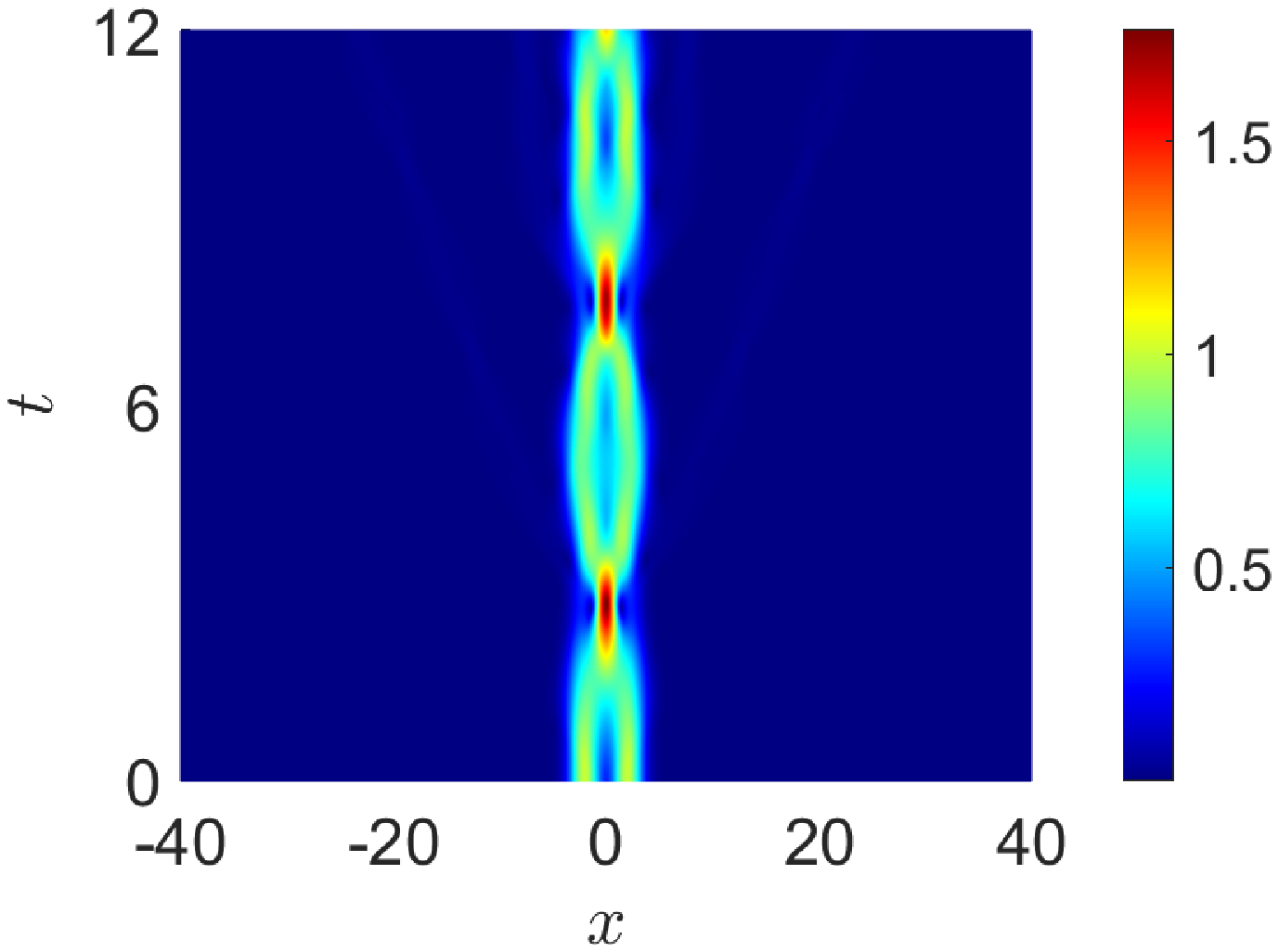}}
   \subfigure{\includegraphics[width=0.30\textwidth]{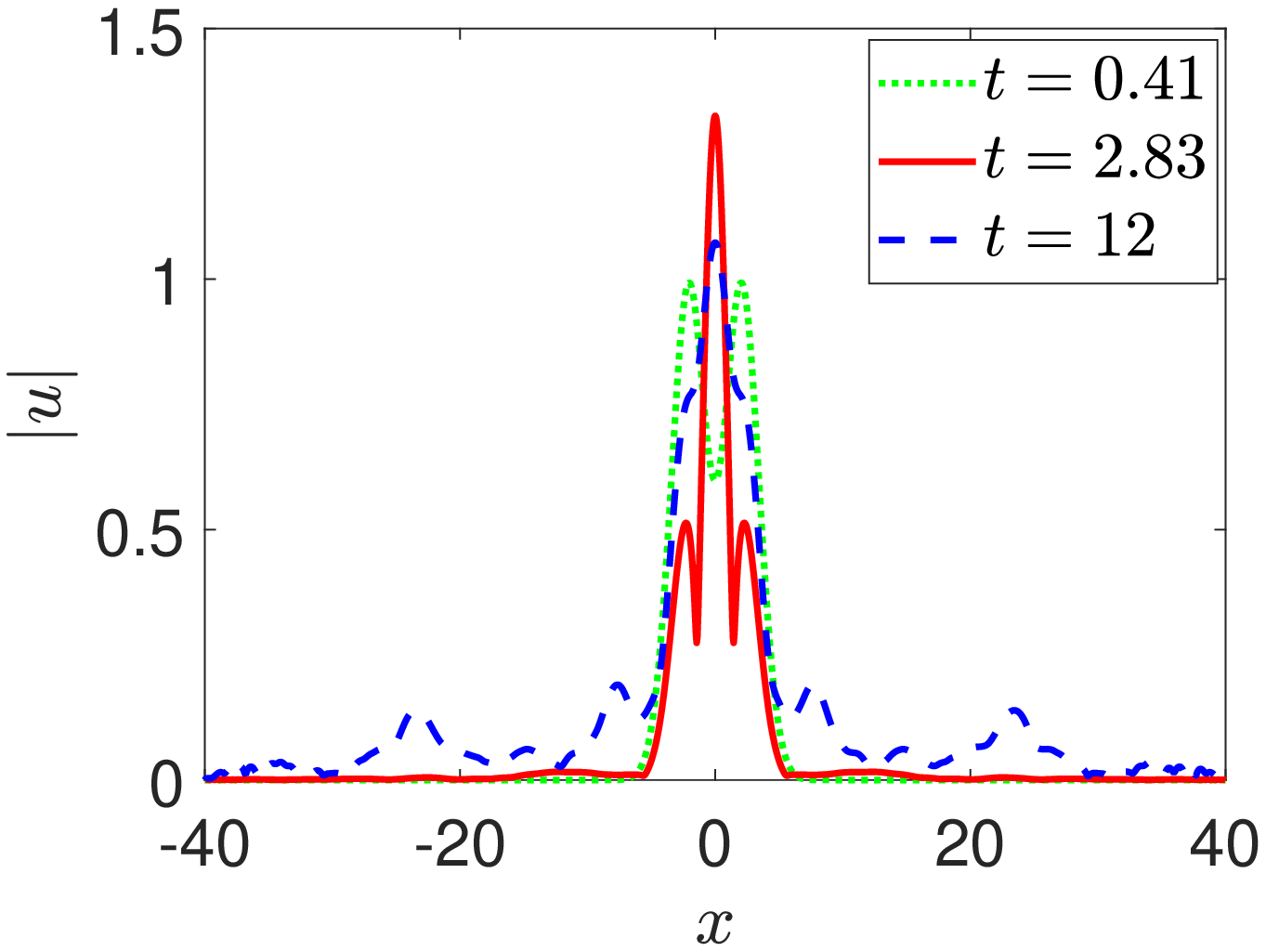}}
   \subfigure{\includegraphics[width=0.30\textwidth]{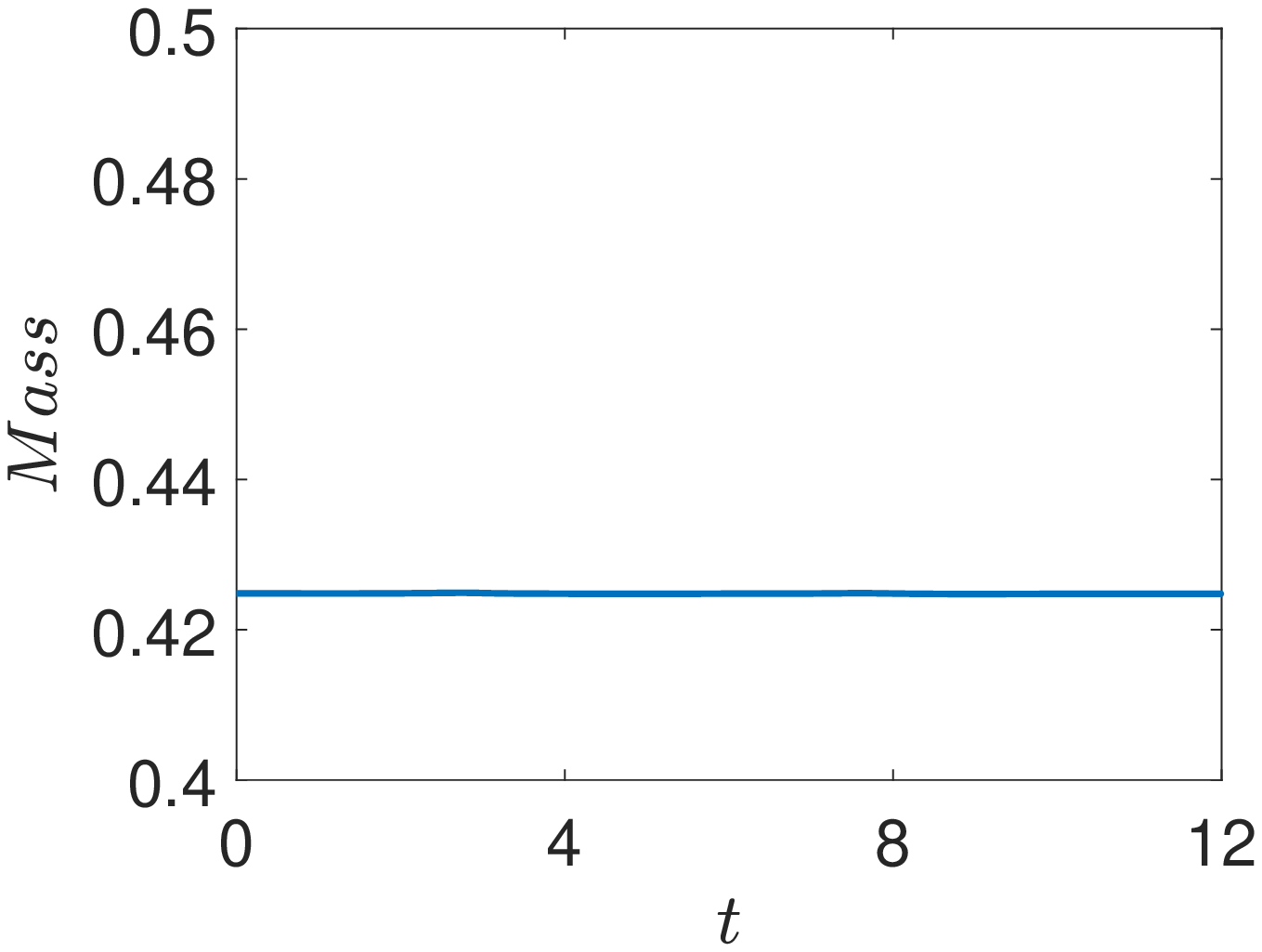}}
   \subfigure{\includegraphics[width=0.30\textwidth]{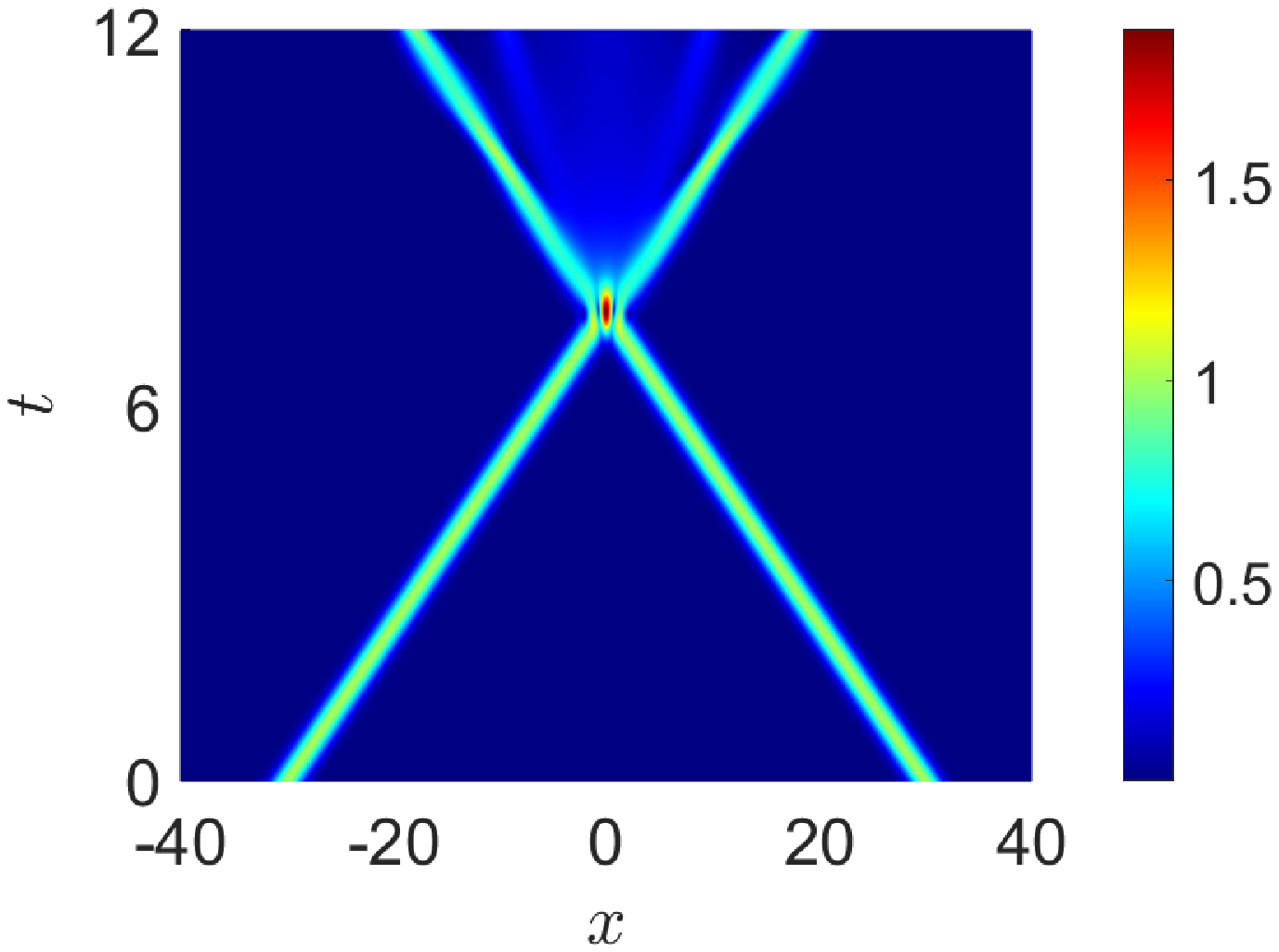}}
   \subfigure{\includegraphics[width=0.30\textwidth]{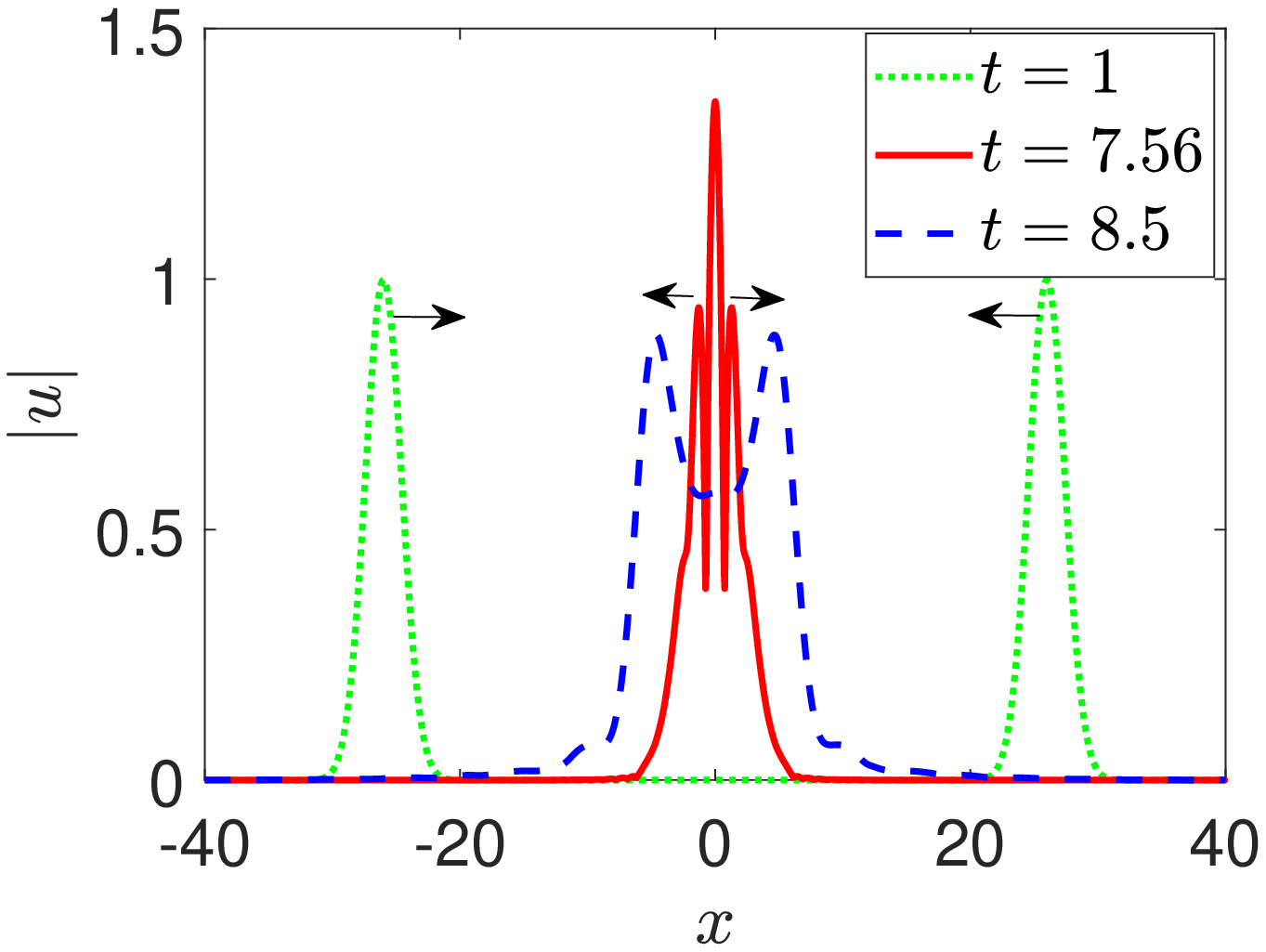}}
   \subfigure{\includegraphics[width=0.30\textwidth]{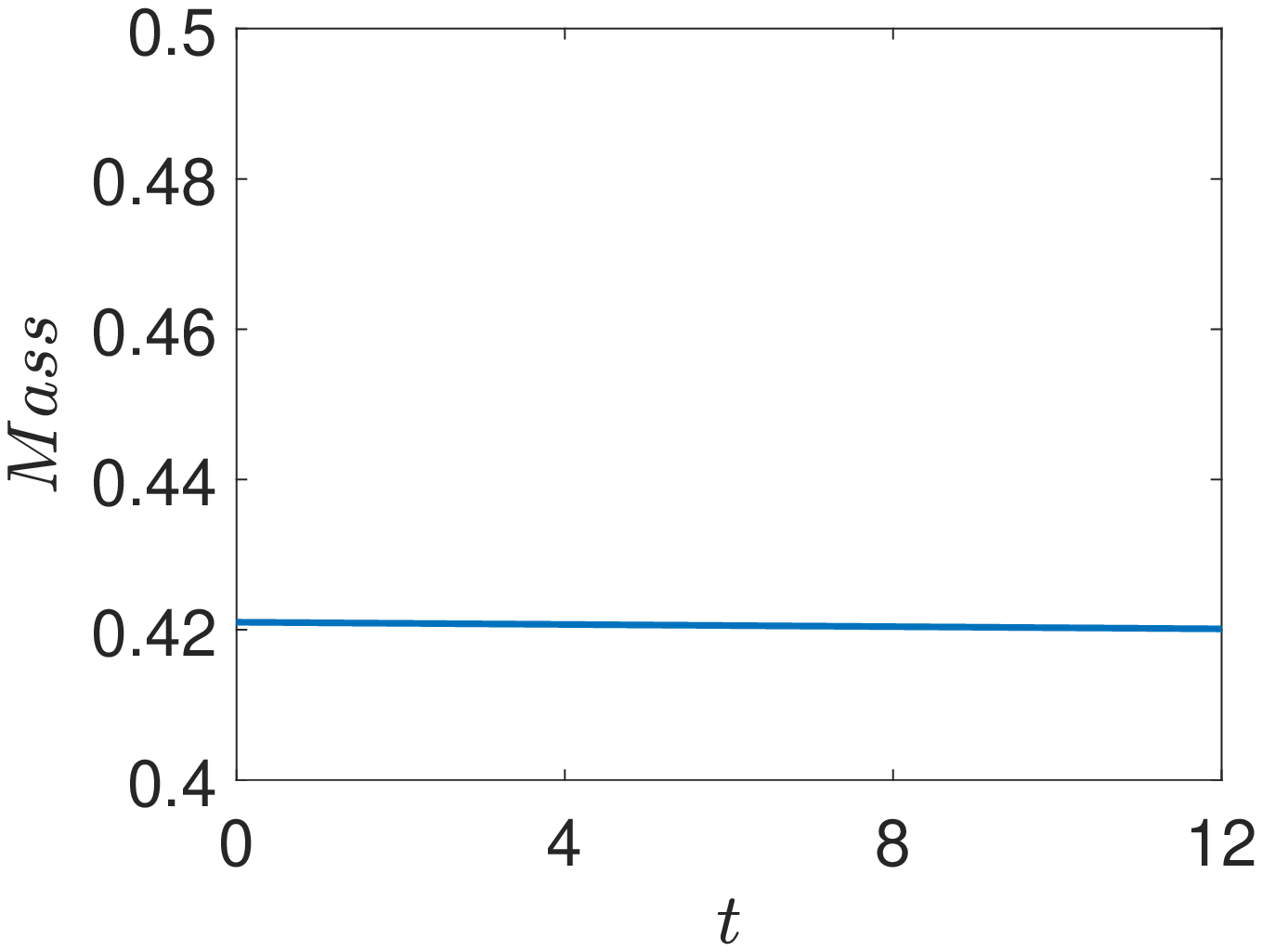}}
   \caption{\small Plots of $|u|$ (first column), $|u|$ at different times (second column), and mass (third column) for different parameters, i.e.,  Cases (i) - (iii) (from top to bottom) given above.
    }\label{logseu1DFEM}
\end{figure}

We next take the initial data as in \cite[Example 3]{Paraschis2023FixedpointLogSE}:
\begin{equation}
u_0(x,y)=\tanh(x)\tanh(y)\exp(-x^2-y^2),
\end{equation}
and  solve the problem on $\Omega= (-10,10)^2$ with time step ${\tau=10^{-4}}$ and mesh size $h=0.1$ with zero boundary conditions.
In Figure  \ref{logseu2DFEM}, we plot the numerical solutions at  $t=0, \, 0.25, \,0.5,$ where the four peaks expand as time evolves, and the magnitude decreases and  the supports span to preserve the mass.
\begin{figure}[!ht]
  \centering
   \subfigure[$T=0$]{\includegraphics[width=0.3\textwidth]{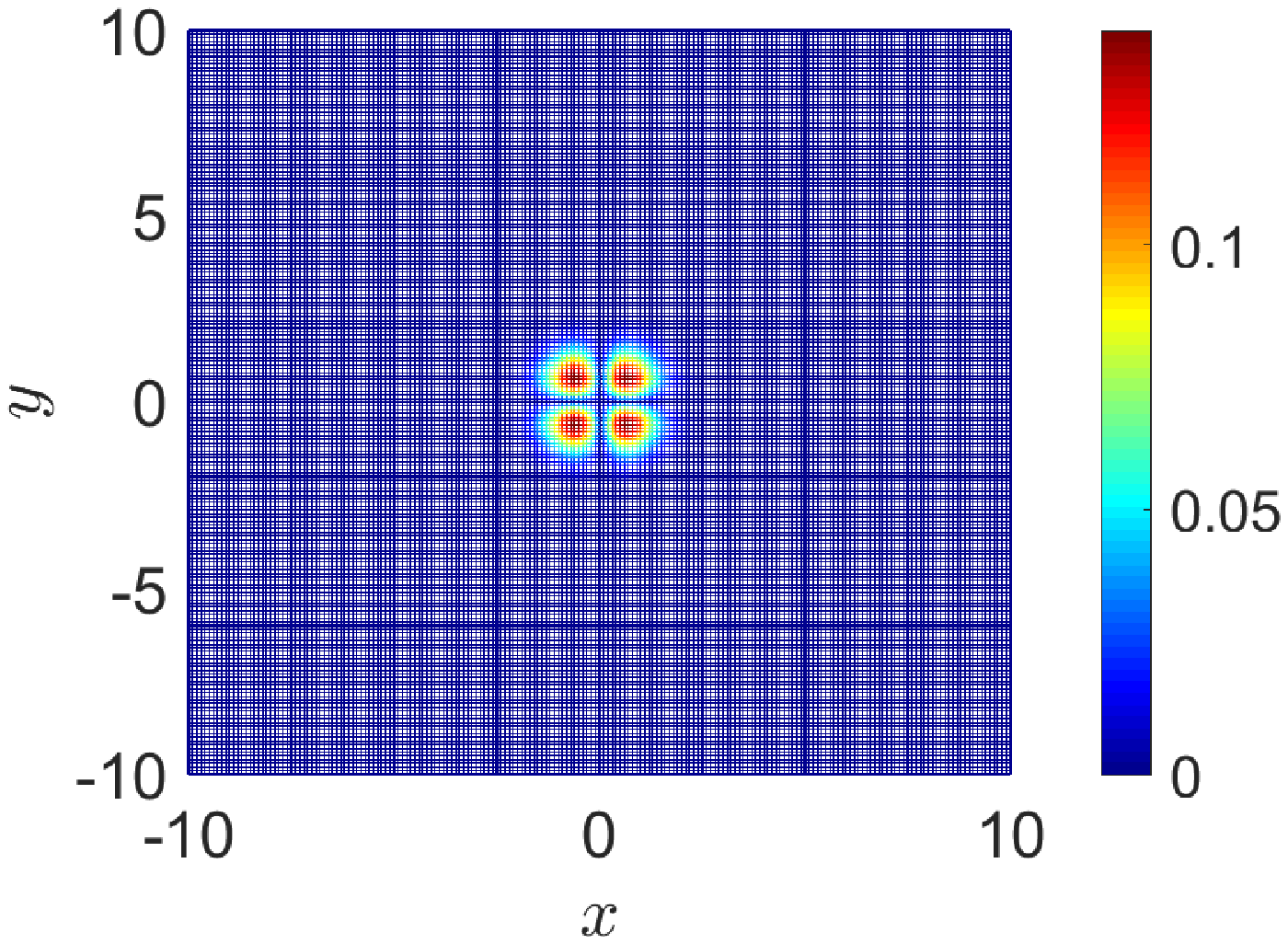}}
   \subfigure[$T=0.25$]{\includegraphics[width=0.3\textwidth]{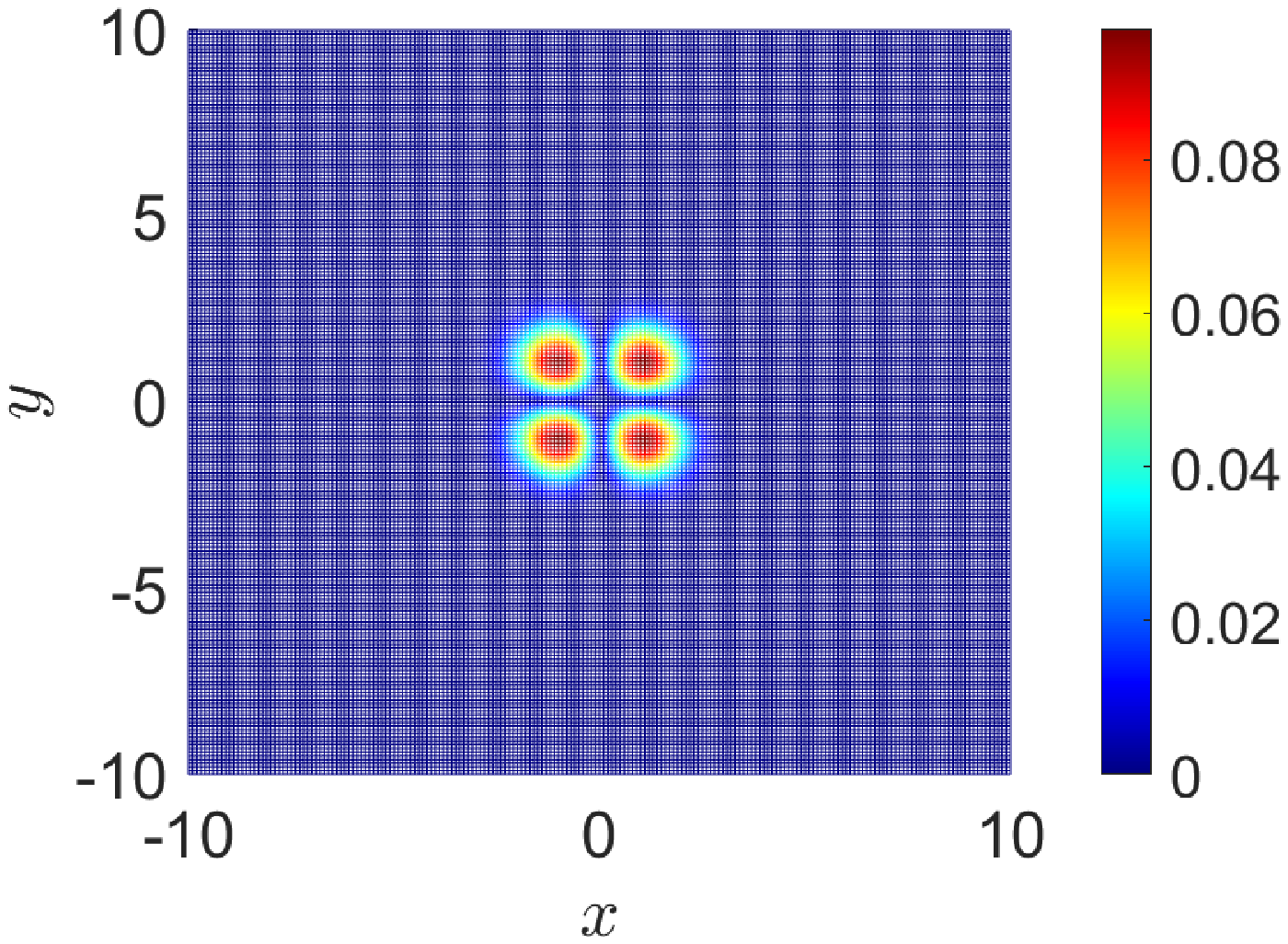}}
   \subfigure[$T=0.5$]{\includegraphics[width=0.3\textwidth]{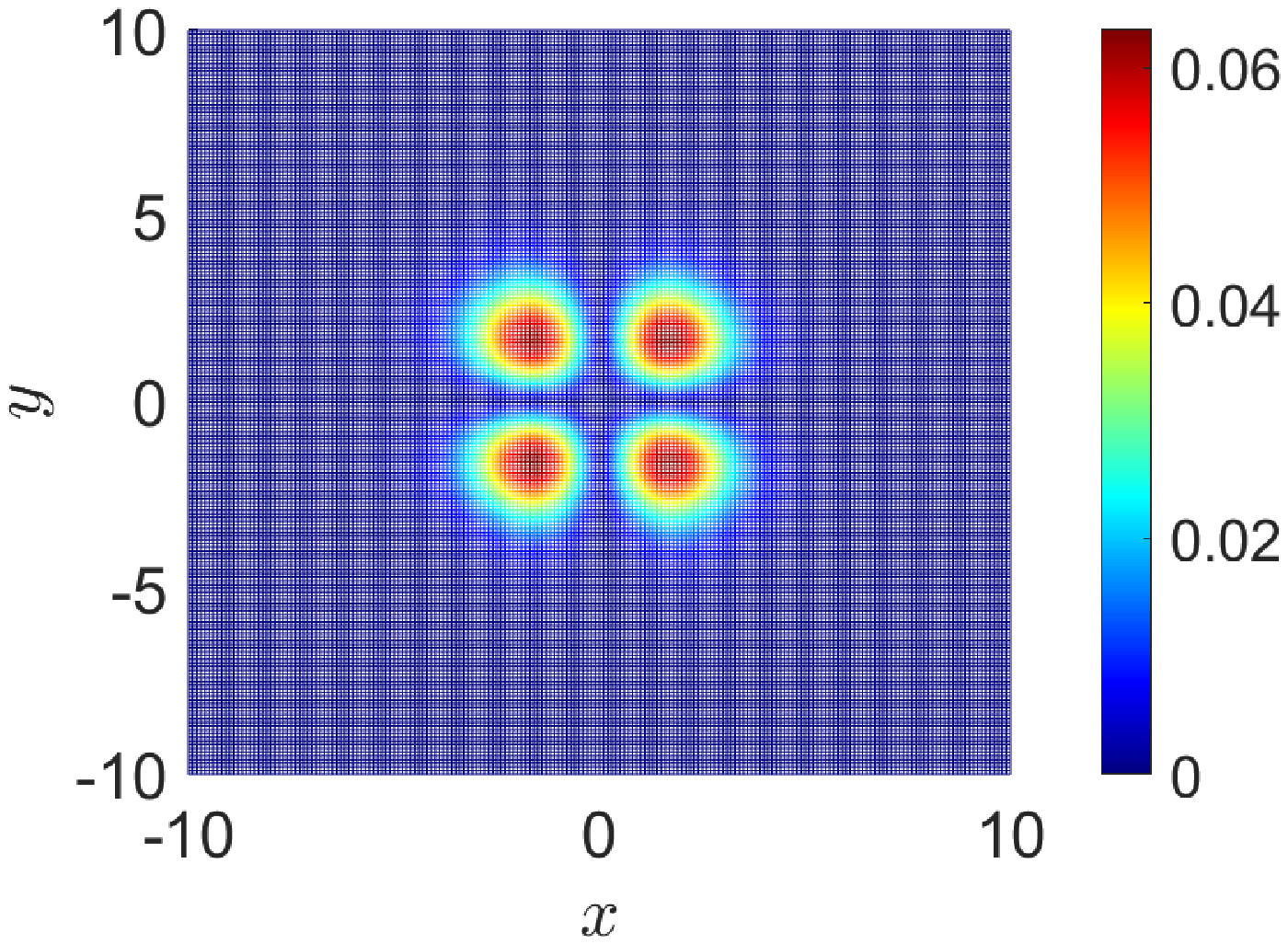}}
   \caption{The dynamics of the numerical solution of the LogSE at different times.}\label{logseu2DFEM}
\end{figure}
\section{Concluding remarks}
In this paper, we analyzed for the first time a non-regularized IMEX scheme for the LogSE  based on  the H\"older continuity of the nonlinear term and a  nonlinear Gr\"{o}nwall's inequality.  Compared with its counterpart with a regular nonlinear term, there appeared an extra $|\!\ln h|$-factor and a  mild reduction of convergence order,  which seemed inevitable.

The logarithmic PDEs have attracted much recent interest in analysis and computation. There are many issues are worthy of deep and future investigations, e.g., the mass and energy-preserving schemes for the LogSE and its variants including the LogSE with regular or singular potentials. This is indeed our first endeavour in exploring such  singular PDEs. 

\bigskip

\noindent{\bf Acknowledgment:}\;  The authors would like to thank the anonymous referees for the valuable comments  that have   led to  significant improvements of this work.  In particular, we are grateful to the referee's suggestion on the analysis of the linearised scheme
$$
{\rm i} \frac{u^{n+1}-u^n}{\tau} +\Delta u^{n+1} =\lambda u^n \ln (\tau+|u^n|^2).
$$
This motivated us to consider the scheme \eqref{lognlsfulldisweak} with a direct discretisation of the nonlinear term as $\lambda u^n \ln |u^n|^2$.

\begin{appendix}
	\setcounter{equation}{0}
	\renewcommand{\theequation}{A.\arabic{equation}}
		\section{Proof of Proposition \ref{ExistUnique}}\label{AppendixA}
We resort to the abstract framework (see e.g., \cite[Lemma 3.1]{akrivis_fully_1991}):
{\em Let $\{{\mathbb H},(\cdot,\cdot)_{\mathbb H}\}$ be a finite-dimensional inner product space with the associated norm $\| \cdot \|_{\mathbb H},$ and let the functional $g: {\mathbb H} \mapsto {\mathbb H}$ be  continuous.  Assume that there exists $\sigma>0$ such that for every $z \in {\mathbb H}$ with $\| z \|_{\mathbb H}= \sigma,$ there holds $\Re(g(z), z)_{\mathbb H}\ge 0$. Then there exists some $z_{*} \in {\mathbb H}$ such that $g( z_{*} )=0$ and $\| z_{*} \|_{\mathbb H} \le \sigma$.}

To adapt it to this setting,  we define a continuous linear functional $g : V_h^0 \mapsto V_h^0$ of the form
\begin{equation*}
g(v): = v  - \ri \tau \Delta v + 2\ri \lambda \tau  f(u_h^n) - u_h^n.
\end{equation*}
Let ${\mathbb H}=L^2(\Omega)$ and set $\| v \| = \sigma$. By using 
the Cauchy-Schwarz inequality, we have
\begin{equation*}
\begin{split}
\Re (g(v), v)&= \Re\big\{ \| v \|^2  + \ri \tau \| \nabla v \|^2 + 2\ri \lambda \tau ( f(u_h^n), v ) - (u_h^n, v) \big\} \\
& = \| v \|^2 -2 \lambda \tau \Im ( f(u_h^n), v ) - \Re (u_h^n, v) \\
& \ge  \big( \| v \| - 2|\lambda| \tau  \| f(u_h^n)\| -  \| u_h^n \| \big)\| v \|.
\end{split}
\end{equation*}
We next show that $ \| f(u_h^n) \|$ is bounded.  Following  \cite[sec. 2]{Bao2019Error} and using the fundamental inequality  
\begin{equation*}
\ln(x) \le \frac{ x^{\eta} }{  \eta  \rm{e} }, \quad  x \ge 1,\;\;\; 0< \eta \le 1,
\end{equation*}
and the interpolation inequality
$$
\|u\|_{L^p(\Omega)}\le c\big(\|u\|^{1-\beta} \|\nabla u\|^{\beta}+\|u\|\big),\quad \frac 1 p=\frac 1 2 -\frac{\beta} d,\;\; \beta\in [0,1),
$$
with $p=2(1+\eta),$ we derive  
\begin{equation*}
\begin{split}
\int_{\Omega} |u_h^{n}|^2 \big|\! \ln  | u_h^n | \big|^2 {\mathrm{d}} x &\le \frac{1}{ \eta {\mathrm{e} } }\int_{ \{x: |u_h^n|\ge 1 \} } |u_h^n|^{2+ 2\eta} { \mathrm{d} } x +  \frac{1}{ \eta {\mathrm{e} } } \int_{ \{x: |u_h^n| < 1 \} }  |u_h^n|^{2- 2\eta} { \mathrm{d} } x \\
& \le  \frac{1}{ \eta {\mathrm{e} } } \big( \| u_h^{n}\|_{ L^{2 + 2\eta}(\Omega) }^{2+ 2\eta} +|\Omega| \big) \\
& \le  C\big(\| u_h^{n} \|^{2+(2-d) \eta} \| \nabla u_h^n \|^{d\eta} + \| u_h^n\|^{2+2\eta}  + |\Omega| \big),
\end{split}
\end{equation*}
From the inverse inequality $\|\nabla u_h^n\|\le Ch^{-1}\|u_h^n\|$ (cf. \cite[Lemma 4.5.3]{Susanne2008Book}), we obtain
$$
\tau   \| f(u_h^n)\|\le C\tau(1+ \| u_h^n\|^{1+\eta-d\eta/2}   \| \nabla u_h^n \|^{d\eta/2})\le C\tau\big(1+ h^{-d\eta/2}\| u_h^n\|^{1+\eta}\big).
$$
We choose some
$$\sigma\ge \sigma_{\tau,h} = C\tau\big(1+ h^{-d\eta/2}\| u_h^n\|^{1+\eta}\big)+\|u_h^n\|, $$
so that $\Re (g(v), v)\ge 0.$ From the statement  stated in the beginning of the proof, we deduce that there exists $v_* \in V_h^0$ such that $\| v_* \| \le \sigma$ and $g(v_*) =0$.

We now show the uniqueness of the solution to \eqref{lognlsfulldisweak}.
It is evident that the solution $v_*$ satisfies
\begin{equation*}\label{unique02}
\frac{\ri}{ \tau }  ( v_* - u_h^n, v_h) -  ( \nabla v_*, \nabla v_h) = 2\lambda (f(u_h^n), v_h ),\quad \forall v_h\in V_h^0.
\end{equation*}
Let $u_h^{n+1}$ be another solution of \eqref{lognlsfulldisweak}. Then we have the error equation
%
\begin{equation*}\label{unique03}
\frac{\ri}{ \tau } (u_h^{n+1} - v_*, v_h)  -( \nabla ( u_h^{n+1} - v_*), \nabla v_h) = 0.
\end{equation*}
Taking $v_h =  u_h^{n+1} - v_*,$ we find from the resulting equation that
\begin{equation*}
\| u_h^{n+1} - v_*\| =\|\nabla ( u_h^{n+1} - v_*)\|= 0,
\end{equation*}
which implies $u_h^{n+1} = v_*$.

\setcounter{equation}{0}
	\renewcommand{\theequation}{B.\arabic{equation}}
	\section{Proof of Lemma \ref{Tnorder}}\label{AppendixC}

	Using the Taylor expansion and applying some simple substitutions to the integral remainder, we have
	\begin{equation*}
		\mathcal{T}^{n}= {\rm{i}}  \tau\int^{1}_{0}(1-s)u_{tt}(\cdot,t_n+s\tau)\,\mathrm{d} s +  \tau  \int^{1}_{0} \Delta u_{t}(\cdot,t_n+s\tau)\, \mathrm{d}s,
	\end{equation*}
	where  we understand $u_t, u_{tt}$ are the partial derivatives with respect to the second variable of $u.$
	From the Cauchy-Schwarz inequality, we derive
	\begin{equation*}
		\begin{split}
			\|\mathcal{T}^{n}\|^2
			&\le {2 }\tau^2\int_{\Omega}\Big|\int^{1}_{0}(1-s)u_{tt}(x,t_n+s\tau)\mathrm{d}s\Big|^2\mathrm{d}x+ {2} \tau^2 \int_{\Omega}\Big|\int^{1}_{0}\Delta u_{t}(x,t_n+s\tau)\mathrm{d}s\Big|^2\mathrm{d}x\\
			&\le{2} \tau^2\int^{1}_{0}(1-s)^2\mathrm{d}s\int^{1}_{0}\int_{\Omega}\big|u_{tt}(x,t_n+s\tau)\big|^2\mathrm{d}x\mathrm{d}s+ {2}  \tau^2 \int^{1}_{0}\int_{\Omega}\big|\Delta u_{t}(x,t_n+s\tau)\big|^2\mathrm{d}x\mathrm{d}s\\
			&\le  { \frac{2}{3} } \tau^2 \int^{1}_{0}\|u_{tt}(\cdot,t_n+s\tau)\|^2\mathrm{d}s+ {2} \tau^2 \int^{1}_{0}\|\Delta u_{t}(\cdot,t_n+s\tau)\|^2\mathrm{d}s\\
			&\le {\frac{2}{3} } \tau^2 \max_{ t\in  [t_n, t_{n+1}] }\|u_{tt}\|^2+  {2 } \tau^2 \max_{ t\in [t_n, t_{n+1}]   }\|\Delta u_{t}\|^2.
		\end{split}
	\end{equation*}
	This implies \eqref{tnest}.
\end{appendix}

\bibliographystyle{siam}

\bibliography{Full}
\end{document}